\newcommand{\HH}{\mathcal H} 
\newcommand{\LL}{\mathcal L} 
\newcommand{\MM}{\mathcal M}
\newcommand{\vv}{\bold v}
 \newcommand{\FF}{\mathcal{F}}
 \newcommand{\RR}{\mathbf{R}}  
 \newcommand{\ZZ}{\mathbf{Z}}  
 \newcommand{\BB}{\mathbf{B}}  
 \newcommand{\eps}{\epsilon}
 \newcommand{\M}{\operatorname{M}} 
\def\begfig {
\begin{figure}
\small }
\def\endfig {
\normalsize
\end{figure}
}
    \newtheorem{theorem}    {Theorem}       [section]
    \newtheorem{lemma}      [theorem]       {Lemma}
    \newtheorem{corollary}  [theorem]     {Corollary}
    \theoremstyle{definition}
    \newtheorem{definition}  [theorem] {Definition}
    \theoremstyle{definition}
    \newtheorem{remark}   [theorem]       {Remark}
\begin{document}

\renewcommand{\thesubsection}{\thetheorem}

\title[Currents and Flat Chains]{Currents and Flat Chains \\Associated to Varifolds, 
     with an \\ Application to Mean Curvature Flow}
\author{Brian White}
\thanks{This research was supported by the NSF 
  under grants~DMS-0406209 and~DMS-0707126}
\email{white@math.stanford.edu}
\date{May 13, 2008.  Revised November 8, 2008}

\begin{abstract}
 We prove under suitable hypotheses that convergence of integral varifolds
 implies convergence of associated mod $2$ flat chains and subsequential
 convergence of associated integer-multiplicity rectifiable currents.  
 The convergence results imply restrictions on the kinds of singularities
 that can occur in mean curvature flow.
\end{abstract}

\subjclass[2000]{Primary: 49Q15; Secondary: 53C44}

\maketitle

\section{Introduction}\label{section:intro}

\newcommand{\LLL}{\LL_{\text{$m$-rec}}}

Let $U$ be an open subset of $\RR^N$.  Let $\LLL(U, \ZZ^+)$ denote the space of functions
on $U$ that take values in nonnegative integers, that are locally $\LL^1$ with respect to Hausdorff $m$-dimensional
measure on $U$, and that vanish except on a countable disjoint union of $m$-dimensional $C^1$ submanifolds of $U$. 
We identify functions that agree except on a set of Hausdorff $m$-dimensional measure zero.
 Let $\LLL(U,\ZZ_2)$ be the corresponding space with the nonnegative integers
$\ZZ^+$ replaced by $\ZZ_2$, the integers mod~$2$.

The space of $m$-dimensional integral varifolds in $U$ is naturally isomorphic to $\LLL(U,\ZZ^+)$:
given any such varifold $V$, the corresponding function is the density function $\Theta(V,\cdot)$
given by
\[
   \Theta(V,x) = \lim_{r\to 0}\frac{\mu_V(\BB(x,r))}{\omega_mr^m}
\]
where $\mu_V$ is the radon measure on $U$ determined by $V$ and $\omega_n$ is the volume of the
unit ball in $\RR^m$.  In particular, this limit exists and is a nonnegative integer for $\HH^m$-almost every $x\in U$.

Similarly, the space of $m$-dimensional rectifiable mod $2$ flat chains in $U$ is naturally isomorphic
to $\LLL(U, \ZZ_2)$: given any such flat chain $A$, the corresponding function is the density function
$\Theta(A,\cdot)$ given by
\[
   \Theta(A,x) = \lim_{r\to 0}\frac{\mu_A(\BB(x,r))}{\omega_mr^m} 
                       = \lim_{r\to 0}\frac{M(A\cap \BB(x,r))}{\omega_mr^m}
\]
where $\mu_A$ is the radon measure on $U$ determined by $A$.  In particular, this limit exists and is $0$ or $1$
for $\HH^m$-almost every $x\in U$.

The surjective homomorphism 
\begin{align*}
    [\cdot]: \, &\ZZ^+ \to \ZZ_2 \\
    &k \mapsto [k]
\end{align*}
determines a homomorphism from $\LLL(U,\ZZ^+)$
to $\LLL(U, \ZZ_2)$, and thus also a homomorphism from the additive semigroup of integral varifolds in $U$
to the additive group of rectifiable mod $2$ flat chains in $U$.  If $V$ is such a varifold, we let $[V]$
denote the corresponding rectifiable mod $2$ flat chain.  Thus $[V]$ is the unique rectifiable mod $2$ flat chain in $U$ such that
\[
  \Theta([V], x) = [ \Theta(V,x)]
\]
for $\HH^m$-almost every $x\in U$.

Although in some ways integral varifolds and rectifiable 
 mod $2$ flat chains are similar, the notions
of convergence are quite different.  Typically (and throughout this paper) convergence of varifolds means weak convergence as radon measures on $U\times {\rm G}_m(\RR^N)$
 (where ${\rm G}_m(\RR^N)$ is the set of $m$-dimensional linear subspaces of $\RR^N$),  and  convergence of flat chains means  convergence with respect to the flat topology
  (see Section~\ref{s:Appendix}). 
  A sequence $V(i)$ of integral varifolds may converge even though the associated flat chains $[V(i)]$ do
not converge.   Similarly, the flat chains $[V(i)]$ may converge even though the varifolds $V(i)$ do not.  Furthermore,
the $V(i)$ and $[V(i)]$ may converge to limits $V$ and $A$, respectively, with $A\ne [V]$.  
See Section~\ref{s:examples}
for examples.

This paper identifies an important situation in which convergence of integral varifolds  implies convergence of
the corresponding mod $2$ flat chains to the expected limit.     In practice, one often proves existence of convergent sequences of integral varifolds by appealing to Allard's compactness theorem
(described in Section~\ref{s:main} below).   Here we prove that if a sequence
of integral varifolds with limit $V$ satisfies the hypotheses of Allard's compactness theorem plus one additional
hypothesis, then the corresponding mod $2$ flat chains converge to $[V]$:

\begin{theorem}\label{Intro:Mod2Theorem}
Let $V(i)$ be a sequence of $m$-dimensional integral varifolds in an open set $U$ of $\RR^N$
that converges to a limit $V$.
Suppose that
\begin{enumerate}[\upshape (1)]
 \item The $V(i)$ satisfy the hypotheses of Allard's compactness theorem for
   integral varifolds, and
 \item The boundaries $\partial [V(i)]$ of the mod $2$ flat chains $[V(i)]$ converge in the flat
 topology.
 \end{enumerate}
Then the chains $[V(i)]$ converge in the flat topology to $[V]$.
\end{theorem}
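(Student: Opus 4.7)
My plan is to extract a flatly convergent subsequence of $[V(i)]$ and identify its limit with $[V]$ by matching densities at $\HH^m$-a.e.\ point of $\operatorname{spt}(V)$.

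First, hypothesis~(1) gives uniform local mass bounds on the $V(i)$, hence also on the chains $[V(i)]$, while hypothesis~(2) bounds $\partial[V(i)]$ locally in flat norm (convergent sequences are bounded). The standard compactness theorem for rectifiable mod~$2$ flat chains then produces a subsequence, still denoted $[V(i)]$, converging flatly to a rectifiable mod~$2$ chain $A$; by a routine subsequence-of-subsequences argument it suffices to show $A=[V]$ for every such limit. From $\mu_{V(i)}\to\mu_V$ as Radon measures (a consequence of varifold convergence), $\mu_{[V(i)]}\le\mu_{V(i)}$, and lower semicontinuity of mass under flat convergence, we obtain $\mu_A\le\mu_V$; in particular $\operatorname{spt}(A)\subseteq\operatorname{spt}(V)$. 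Since both $A$ and $[V]$ are rectifiable mod~$2$ flat chains supported in $\operatorname{spt}(V)$, the equality $A=[V]$ reduces to the pointwise identity $\Theta(A,x)=[\Theta(V,x)]$ for $\HH^m$-a.e.\ $x$.

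At $\HH^m$-a.e.\ $x\in\operatorname{spt}(V)$, $V$ has a unique tangent $m$-plane $P$ and integer density $k=\Theta(V,x)$. The central step is to apply Allard's regularity theorem to $V(i)\to V$ near such $x$: for $r$ small and $i$ large, $V(i)\cap\BB(x,r)$ decomposes as a union of $k$ $C^{1,\alpha}$ graphs over $P$, all converging uniformly to the limiting multiplicity-$k$ sheet of $V$ in $\BB(x,r)$. The chain $[V(i)]\cap\BB(x,r)$ is then the mod~$2$ sum of these $k$ nearby graphs, and a short geometric estimate---two nearby $C^1$ graphs over a common domain cobound a thin $(m{+}1)$-dimensional slab of small volume---shows that this mod~$2$ sum converges in flat norm to $[k]$ copies of the limit sheet. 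Hence $\Theta(A,x)=[k]=\Theta([V],x)$, giving $A=[V]$.

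The principal obstacle is the multi-sheet graphical decomposition at $k>1$. Allard's regularity theorem is most cleanly stated in the multiplicity-one setting; extracting an honest $C^{1,\alpha}$ decomposition of each $V(i)$ into $k$ sheets near a multiplicity-$k$ point of $V$ requires careful control of density, tangent-plane excess, and first variation along the sequence, and this is where the full force of Allard's compactness hypothesis is used. Once the graphical decomposition is in place, the mod~$2$ flat-norm cancellation that merges $k$ close sheets into $[k]$ copies of their common limit is elementary.
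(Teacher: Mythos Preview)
Your compactness and reduction steps (extracting a subsequential flat limit $A$, the inequality $\mu_A\le\mu_V$, and the reduction to $\Theta(A,x)=[\Theta(V,x)]$ at $\HH^m$-a.e.\ $x$) match the paper exactly. The gap is in your ``central step'': the $C^{1,\alpha}$ multi-sheet decomposition of $V(i)$ near a density-$k$ point of $V$ simply does not follow from the hypotheses. Allard's regularity theorem requires density close to $1$; at $k\ge 2$ there is no analogous sheeting theorem under mere local bounds on mass and first variation. A standard obstruction: let $V$ be a multiplicity-$2$ plane and let the $V(i)$ be rescaled catenoids (or two parallel planes joined by many small catenoidal necks). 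These are minimal, so $\|\delta V(i)\|=0$, they converge to $V$, yet $V(i)$ is a single connected sheet with no graphical decomposition into two pieces over any ball containing a neck. You correctly flag this as the ``principal obstacle'' but do not overcome it; the assertion that ``the full force of Allard's compactness hypothesis'' yields the decomposition is not justified, and in fact it is false.

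The paper avoids any sheeting statement. After the same reduction, it blows up at a good point $x$ by a diagonal choice of scales $\lambda(i)\to 0$ so that the rescaled varifolds $\tilde V(i)$ converge to $k\,\vv(P)$, the rescaled chains $[\tilde V(i)]$ converge to $\Theta(A,x)[P]$, and (using that $\|\delta V(i)\|\BB(x,r)\lesssim r^m$ at a.e.\ $x$) the first variations $\|\delta\tilde V(i)\|$ tend to $0$. It then projects orthogonally onto $P$ and invokes the measure-theoretic lemma behind Allard's integrality theorem (Simon, Lemma~42.9): writing $\theta_i(y)=\sum_{z\in\pi^{-1}(y)}\Theta(\tilde V(i),z)$, one has $\LL^m\{y:\theta_i(y)\ne k\}\to 0$. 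This immediately forces $[\pi_\#\tilde V(i)]\to[k]\,[\BB^m]$, hence $\Theta(A,x)=[k]$. No regularity of the $V(i)$ is needed---only $L^1$ control of the projected multiplicity function---which is exactly what survives in the catenoid-type examples that break your approach.
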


I do not know whether hypothesis (2) is really necessary.

There is an analogous theorem with rectifiable currents in place of mod $2$ flat chains.  Suppose $A$ is
an $m$-dimensional integer-multiplicity rectifiable current in $U$ and that $V$ is an $m$-dimensional
integral varifold in $U$.   
Recall that $A$ determines an integral varifold $\vv(A)$ by forgetting orientations
    \cite{SimonBook}*{\S 27}.
We say that $A$ and $V$ are {\bf compatible} provided
\[
    V = \vv(A) + 2 W
\]
for some integral varifold $W$ in $U$.  
Thus $A$ and $V$ are compatible if and only if they determine the same 
mod $2$ rectifiable chain.
Equivalently, $A$ and $V$ are compatible provided
\[
    \Theta(V,x) - \Theta(A,x)
\]
is a nonnegative, even integer for $\HH^m$-almost every $x\in U$.

The analog of Theorem~\ref{Intro:Mod2Theorem} for integer-multiplicity currents is the following:

\begin{theorem}\label{Intro:IntegerTheorem}
Let $V(i)$ and $A(i)$ be sequences of $m$-dimensional integral varifolds and integer-multiplicity
currents, respectively, in $U$, such that $V(i)$ and $A(i)$ are compatible for each $i$.
Suppose the $V(i)$ satisfy the hypotheses of Allard's compactness theorem for integral varifolds.
Suppose also that the boundaries $\partial A(i)$ converge (in the integral flat topology) to a limit current.
Then there is a subsequence $i(k)$ such that
the $V(i(k))$ converge to an integral varifold $V$,
the $A(i(k))$ converge to a limit integer-multiplicity current $A$, and
 such that $A$ and $V$ are compatible.
 \end{theorem}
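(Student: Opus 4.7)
The plan is to combine Theorem~\ref{Intro:Mod2Theorem} with the Federer-Fleming compactness theorem for integer-multiplicity rectifiable currents. Step~1: apply Allard's compactness theorem to extract a subsequence (still denoted $V(i)$) converging to an integral varifold $V$. Step~2: apply Theorem~\ref{Intro:Mod2Theorem} to the $V(i)$. Its hypotheses need checking: compatibility gives $[V(i)] = [A(i)]$ as rectifiable mod $2$ chains, and because mod $2$ reduction commutes with $\partial$ we have $\partial [V(i)] = [\partial A(i)]$. The integer flat convergence of $\partial A(i)$ descends to mod $2$ flat convergence of $[\partial A(i)]$, since the quotient map $[\cdot]$ is $1$-Lipschitz with respect to flat norms. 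Theorem~\ref{Intro:Mod2Theorem} therefore yields $[A(i)] = [V(i)] \to [V]$ in the mod $2$ flat topology.

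Step~3: extract a further subsequence along which the currents themselves converge. By compatibility, $V(i) = \vv(A(i)) + 2W(i)$ with $W(i)$ a nonnegative integral varifold, so $\mu_{\vv(A(i))} \leq \mu_{V(i)}$ and hence $M(A(i) \llcorner K) \leq \mu_{V(i)}(K)$ on any compact $K \subset U$; Allard's hypotheses thus supply uniform local mass bounds on $A(i)$. Together with a uniform local mass bound on $\partial A(i)$ (from its integer flat convergence), the Federer-Fleming compactness theorem extracts a subsequence $A(i(k))$ converging in the integer flat topology to an integer-multiplicity rectifiable current $A$.

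Step~4: verify compatibility of $A$ and $V$. Continuity of mod $2$ reduction gives $[A] = \lim [A(i(k))] = \lim [V(i(k))] = [V]$ by Step~2, so $\Theta(V,\cdot) - \Theta(\vv(A),\cdot)$ has even integer values $\HH^m$-a.e. To check that this difference is also nonnegative, pass to a further subsequence along which $\vv(A(i(k)))$ converges as varifolds to some $V_0$ (using the mass bounds). Then $V_0 \leq V$ because $\vv(A(i(k))) \leq V(i(k))$, while $V_0 \geq \vv(A)$ by lower semi-continuity of mass under integer flat convergence to an integer limit. Hence $\vv(A) \leq V$, and combined with the evenness of the difference this gives $V = \vv(A) + 2W$ for an integral varifold $W$.

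The main obstacle I foresee is in Step~3, specifically the uniform local mass bound on $\partial A(i)$ demanded by Federer-Fleming. Integer flat convergence of $\partial A(i)$ to an integer-multiplicity limit bounds the flat norms but not a priori the masses; if this bound is not automatic, Step~3 must be reworked using the bounded mass of $A(i)$ and the Cauchy property of $[A(i)]$ from Step~2 together with the hypothesis on the boundaries. Everything else is either automatic from the cited compactness theorems or an immediate consequence of the mod $2$ result already proved.
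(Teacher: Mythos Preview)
Your approach matches the paper's ``alternative'' argument for Theorem~\ref{IntegerCompatibilityTheorem}: extract $V$ by Allard, extract $A$ by the compactness theorem for integer currents, and deduce compatibility from the mod~$2$ result (Theorem~\ref{Intro:Mod2Theorem}) via $[A(i)]=[V(i)]$.

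On your flagged obstacle in Step~3: it is not a problem. The compactness theorem stated and proved in the paper's appendix (Theorem~\ref{CompactnessTheorem}) requires only flat convergence of the boundaries $\partial A(i)$ together with a local mass bound on the $A(i)$ themselves---no mass bound on $\partial A(i)$ is needed. You already have the local mass bound on $A(i)$ from $\mu_{A(i)}=\mu_{\vv(A(i))}\le\mu_{V(i)}$, so Step~3 goes through as written.

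On Step~4: you are in fact more careful than the paper. The paper finishes by invoking the claim (from the introduction) that an integral varifold and an integer-multiplicity rectifiable current are compatible \emph{if and only if} they determine the same mod~$2$ chain. The ``only if'' direction is clear; the ``if'' direction, taken literally, is false (e.g.\ $V=\vv(M)$ and $A=3[M]$ have the same mod~$2$ reduction but $\Theta(V,\cdot)-\Theta(A,\cdot)=-2$). Your sandwich argument $\vv(A)\le V_0\le V$ supplies the missing nonnegativity; the inequality $\mu_A\le\mu_{V_0}$ follows because on continuity sets $W$ of $\mu_{V_0}$ one has $\mu_A(W)\le\liminf\mu_{A(i(k))}(W)=\mu_{V_0}(W)$. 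So your Step~4 is a genuine (if small) improvement over the paper's sketch.
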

 
The existence of a subsequence for which the limits $V$ and $A$ exist follows immediately
from Allard's compactness theorem for integral varifolds and from the Federer-Fleming compactness
theorem for integer-multiplicity currents.  What is new here is the compatibility of the limits $A$ and $V$.

\section{Preliminaries}

\newcommand{\var}{\operatorname{var}}
\newcommand{\chain}{\operatorname{mod2chain}}
\newcommand{\spt}{\operatorname{spt}}

\stepcounter{theorem}
\subsection{Terminology}\label{s:terminology}
For mod $2$ flat chains, see 
 Fleming's original paper~\cite{Fleming} or, for a different approach, 
  Federer's book~\cite{FedererBook}*{\S4.2.26}.   
Unfortunately (for the purposes of this paper),  a multiplicity $[1]$
plane does not qualify as a mod $2$ flat chain under either 
definition\footnote{Federer's definition requires that a flat chain have compact 
support, and Fleming's definition requires that a flat chain have finite flat norm.}.
By contrast, a multiplicity $1$ plane does qualify as an integral varifold.
Thus in order for the map $V\mapsto [V]$ (as described in the introduction) to 
be a homomorphism from integral varifolds to mod $2$ flat chains,
one must either restrict the class of varifolds or enlarge the class
of flat chains.

If one prefers to restrict, then one should (throughout this paper) replace ``varifold'' by
``compactly supported varifold" and ``flat chain" by 
``compactly supported flat chain".  (Federer's flat chains are automatically compactly supported, but Fleming's need not be.)
Likewise $\LLL(U,\ZZ^+)$ and $\LLL(U,\ZZ_2)$ should be replaced
by the subsets consisting of compactly supported functions.
In particular, the main theorem, 
 Theorem~\ref{Mod2CompatibilityTheorem}, remains true if one makes
 those replacements.   

However, in this paper we have chosen instead to enlarge the class of flat chains.
Fortunately,  only a slight modification in Fleming's definition (or Federer's) 
is required to produce the ``correct'' class of flat chains. 
 (Flat chains so defined would, in the terminology of~\cite{FedererBook}, be called ``locally flat chains" However, although locally flat chains over the integers are briefly mentioned in~\cite{FedererBook} (in Section~4.1.24), 
 the mod $2$ versions are not.)

  See Section~\ref{s:Appendix} for the required modification.  
  
When the coefficient group is the integers (with the standard metric), 
the ``correct'' class of flat chains is defined in~\cite{SimonBook},
and the rectifiability and compactness theorems are proved there.

\stepcounter{theorem}
\subsection{Notation}\label{s:notation}
Suppose $M$ is a Borel subset of a properly embedded $m$-dimensional $C^1$ submanifold of $U$,
or of a countable union of such manifolds.
If $M$ has locally finite $\HH^m$ measure,
we let $[M]$   denote the mod $2$ flat chain associated to $M$ and $\vv(M)$ denote the
integral varifold associated to $M$.
 More generally, if $f: M\to \ZZ^+$
 is a function such that the extension
\begin{align*}
  &F: U\to \ZZ^+ \\
 &F(x) = 
     \begin{cases}
       f(x) &\text{if $x\in M$} \\
       0    &\text{if $x\in U\setminus M$}
      \end{cases}
\end{align*}
is in $\LLL(U,\ZZ^+)$, then we let $\vv(M,f)$ be the 
integral varifold in $U$ corresponding to $F$.

\stepcounter{theorem}
\subsection{Push-forwards}
Suppose that $V$ is an integral varifold in $U$ and that $\phi:U\to W$ is a $C^1$ map
that is proper on $U\cap\spt(\mu_V)$.  Then the push-forward $\phi_\#V$ is also
an integral varifold in $W$ and it satisfies
\begin{equation}\label{e:VarifoldPushForward}
    \Theta(\phi_\#V,y) = \sum_{\phi(x)=y} \Theta(V,x)
\end{equation}
for $\HH^m$-almost every $y\in W$.

Similarly, if $A$ is a rectifiable mod $2$ flat chain in $U$ and if $\phi:U\to W$ is locally lipschitz
on $U\cap\spt{\mu_A}$, then the image chain $\phi_\#A$ satisfies
\begin{equation}\label{e:ChainPushForward}
   [\Theta(\phi_\#A,y)] =   \sum_{\phi(x)=y}{}[\Theta(A,x)]
\end{equation}
for $\HH^m$-almost every $y\in W$.

Note that this determines $\Theta(\phi_\#A,y)$ for $\HH^m$-almost every $y$ since
its value is $0$ or $1$ almost everywhere.  In other words, for $\HH^m$-almost every $y\in W$,
\begin{equation}\label{e:ChainPushForwardCases}
   \Theta(\phi_\#A,y) = 
    \begin{cases}
       1, &\text{if $\sum_{\phi(x)=y}\Theta(A,y)$ is odd, and} \\
       0, &\text{if the sum is even.}
    \end{cases}
\end{equation}

Together~\eqref{e:VarifoldPushForward} and~\eqref{e:ChainPushForwardCases} imply that
\[
    \phi_\#[V] = [\phi_\#V].
\]

We shall need push-forwards only in the special cases where $\phi$ is a dilation or an affine
projection.

\stepcounter{theorem}
\subsection{Examples}\label{s:examples}
Although they are not needed in this paper, some examples illustating the differences between
flat chain convergence and varifold convergence may be instructive.

First, consider a sequence of smooth, simple closed curves $C_i$ lying in a compact region of $\RR^2$
such that the lengths tend to infinity but the enclosed areas tend to $0$.
Let $V_i=\vv(C_i)$ be the corresponding one-dimensional integral varifolds.  
Then the varifolds $V_i$ do not converge, but the corresponding mod $2$ flat chains $[V_i]$
converge to $0$.

Next let 
\begin{equation}\label{e:OddIntervals}
   J_n = \cup \left\{ \left[ \frac{k}{2n}, \frac{k+1}{2n} \right] :  \text{$k$ odd, $0<k<2n$} \right\}
\end{equation}
and let  
\[
    S_n  = J_n \times \{0, 1/(2n)\} \subset \RR^2.
\]
Thus $S_n$ consists of $2n$ horizontal intervals, each of length $1/(2n)$.
Let $V_n=\vv(S_n)$ be the corresponding integral varifold.  
Then the $V_n$ converge to $\vv(I)$, where 
\begin{equation}\label{e:interval}
  I = \{ (x,0): 0\le x \le 1\}.
\end{equation}
However, the corresponding mod $2$ flat chains $[V_n]$ do not converge.
To see this, 
suppose to the contrary that the $[V_n]$ converge to a limit chain $T$.
Let $f, g:\RR^2\to\RR$ be the projections given by
$f(x,y)=x$ and $g(x,y)=x-y$.  Then $f_\#[V_n]=0$ and $g_\#[V_n]=[[0,1]]$.
Passing to the limit, we get
\begin{equation}\label{e:projections}
   f_\#T=0, \qquad g_\#T = [[0,1]].
\end{equation}
However, $T$ is clearly supported in $I$ and $f|I=g|I$, so 
     $f_\#T=g_\#T$
(by~\eqref{e:ChainPushForward}),
contradicting~\eqref{e:projections}.  
This proves that the $[V_n]$ do not converge.
 

For a final example, let
\[
     Q_n = J_n \times [0, (1/n^2)]
\]
where $J_n$ is given by~\eqref{e:OddIntervals}.
Thus $Q_n$ is the union of $n$ closed rectangles, each with base $1/(2n)$ and height $1/(n^2)$.
Let $V_n$ be the one-dimensional varifold associated to the set-theoretic boundary
of $Q_n$: $V_n=\vv(\partial Q_n)$.  Then the $V_n$ converge to $V=\vv(I)$,
where $I$ is given by~\eqref{e:interval}, 
 but the flat chains $[V_n]$ converge to $0$ since the area of $Q_n$ tends to $0$.
 Thus the varifolds $V_n$ converge to $V$ and the chains $[V_n]$ converge to $0$,
 but $[V]\ne 0$.

\section{Proofs of the Main Results}\label{s:main}

Let $V(i)$ be a sequence of $m$-dimensional varifolds in an open subset $U$ of $\RR^N$.
If the $V(i)$ converge to a varifold $V$, then of course
\begin{equation}\label{e:uniformlyboundedmasses}
  \text{$  \limsup \mu_{V(i)} W  <  \infty$ for all $W\subset\subset U$.}
\end{equation}
Conversely, if \eqref{e:uniformlyboundedmasses} holds, then the $V(i)$ have a convergent subsequence (by
the compactness theorem for radon measures.)

\begin{definition}\label{NiceDefinition}
Suppose that $V(i)$, $i=1,2,3,\dots$, and $V$ are $m$-dimensional varifolds in an open subset
$U$ of $\RR^N$.
In this paper, we will say that $V(i)$ converges {\bf with locally bounded first variation} to $V$
provided $V(i)\to V$ as varifolds and 
\begin{equation}\label{e:NicenessBound}
    \limsup_{i\to\infty} \| \delta V(i)\| (W) < \infty
\end{equation}
for every $W\subset\subset U$.
\end{definition}

To understand the definition, the reader may find it helpful to recall that if $V$ is the mutiplicity $1$ varifold associated to a smooth,
embedded manifold-with-boundary $M$, then 
\[
    \| \delta V \| (W) =  \HH^{m-1}(W\cap \partial M)  + \int_{M\cap W} |H(x)|\,d\HH^mx
\]
where $H(x)$ is the mean curvature vector of $M$ at $x$.  Thus for a sequence $V(i)$ of such integral varifolds, the 
  condition~\eqref{e:NicenessBound}  means that the areas of the boundaries and the $L^1$ norms of the mean curvature are uniformly bounded on compact subsets of $U$.
  
(See \cite{AllardFirstVariation} or \cite{SimonBook}*{\S  39} for the general definition
of $\|\delta V\|$.)
  
The following closure theorem of Allard 
(\cite{AllardFirstVariation}*{6.4} or \cite{SimonBook}*{\S42.8})
is one of the key results in the theory of varifolds:

\begin{theorem}\label{AllardClosureTheorem}
If $V(i)$ is a sequence of integral varifolds that converges with locally bounded first variation to $V$, then $V$ is
also an integral varifold.
\end{theorem}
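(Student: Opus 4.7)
My plan is to verify the two defining properties of an integral varifold for $V$: that $V$ is rectifiable, and that its density $\Theta(V,x)$ is a positive integer for $\HH^m$-a.e.\ $x \in \spt\mu_V$. The easy ingredient is that $\|\delta V\|$ is a Radon measure on $U$, which follows from lower semicontinuity of the first variation under weak varifold convergence together with hypothesis~\eqref{e:NicenessBound}: for any $W \subset\subset U$, $\|\delta V\|(W) \leq \liminf_i \|\delta V(i)\|(W) < \infty$.

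Next I would establish the lower density bound $\Theta^{*m}(\mu_V,x) \geq 1$ at every $x \in \spt\mu_V$. Apply Allard's monotonicity formula to each integral $V(i)$: the quantity $r \mapsto e^{C_ir}\mu_{V(i)}(\BB(x,r))/(\omega_m r^m)$ is nondecreasing, with $C_i$ controlled by the local first-variation norm and hence uniformly bounded on compact subsets of $U$. Integrality of $V(i)$ gives density $\geq 1$ at $\mu_{V(i)}$-a.e.\ point of $\spt\mu_{V(i)}$. For $x \in \spt\mu_V$, weak convergence produces $x_i \in \spt\mu_{V(i)}$ with $x_i \to x$; combining monotonicity with the integer density lower bound yields $\mu_{V(i)}(\BB(x_i,r)) \geq e^{-Cr}\omega_m r^m$ for a uniform $C$, and passing to the limit (first in $i$, then $r \to 0$) supplies the claim. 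With $\|\delta V\|$ locally finite and $\Theta^{*m}(\mu_V,\cdot) \geq 1$ on $\spt\mu_V$, Allard's rectifiability theorem implies $V$ is rectifiable, so at $\mu_V$-a.e.\ $x$ the tangent varifold is $\Theta(V,x)\cdot\vv(T_xV)$ for some $m$-plane $T_xV$.

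The main obstacle is proving $\Theta(V,x) \in \ZZ^+$. My approach is a diagonal blow-up: since the rescalings $(\eta_{x,r})_\# V$ converge as $r \to 0$ to $\Theta(V,x)\cdot\vv(T_xV)$, and each fixed rescaling is itself the varifold limit (as $i\to\infty$) of $(\eta_{x,r})_\# V(i)$, a diagonal extraction produces sequences $r_k \to 0$ and $i_k \to \infty$ with $W_k := (\eta_{x,r_k})_\# V(i_k)$ converging as varifolds to $\Theta(V,x)\cdot\vv(T_xV)$, still with locally bounded first variation by scaling invariance. This reduces the problem to the following claim: if a sequence of integral varifolds $W_k$ in $\RR^N$ with locally bounded first variation converges to a multiplicity-$\theta$ plane $\theta\cdot\vv(P)$, then $\theta \in \ZZ^+$. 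I would prove this last step by orthogonal projection $\pi:\RR^N \to P$: the push-forwards $\pi_\# W_k$ are integral varifolds in $P \cong \RR^m$, hence have integer density $\HH^m$-a.e.; they converge to $\theta\cdot\vv(P)$; and the tangent planes of $W_k$ concentrate near $P$ (so the relevant Jacobians tend to $1$). A density-at-the-origin comparison via monotonicity for $\pi_\# W_k$ then forces $\theta$ to be a limit of positive integers, hence itself a positive integer.
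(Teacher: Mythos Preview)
The paper does not prove this theorem; it is stated as Allard's closure theorem with a citation to \cite{AllardFirstVariation}*{6.4} and \cite{SimonBook}*{\S42.8}, so there is no in-paper proof to compare against. Your outline does follow the correct architecture of the standard proof: rectifiability via the lower density bound and monotonicity, then a diagonal blow-up to reduce integrality of $\Theta(V,x)$ to the model case of integral varifolds $W_k$ converging to $\theta\,\vv(P)$. One small correction: at a $\mu_V$-generic blow-up point you can (and need to) arrange that $\|\delta W_k\|\to 0$, not merely stays bounded; this uses the estimate~\eqref{e:ControlledBoundaryDensity} exactly as in the paper's proof of Theorem~\ref{Mod2CompatibilityTheorem}.

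The genuine gap is your last step. You propose to conclude $\theta\in\ZZ$ via ``monotonicity for $\pi_\#W_k$'', but monotonicity requires control on $\|\delta(\pi_\#W_k)\|$, and this is \emph{not} controlled by $\|\delta W_k\|$. For a concrete obstruction, let $W_k$ be two nearly-parallel copies of $P$ joined by $N_k$ catenoidal necks of waist radius $\eps_k$ (so $\delta W_k=0$); then $\pi_\#W_k$ has density $2$ except on $N_k$ small disks where it is $0$. Choosing $N_k\eps_k^2\to 0$ but $N_k\eps_k\to\infty$ gives $W_k\to 2\,\vv(P)$ with $\|\delta(\pi_\#W_k)\|\to\infty$, so no monotonicity is available for the projections and the density of $\pi_\#W_k$ at a point has no reason to be close to $\theta$. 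The actual argument for this step is precisely \cite{SimonBook}*{Lemma~42.9}: one uses monotonicity for the $W_k$ themselves (not their projections) to show that the projected multiplicity $\theta_k(x)=\sum_{\pi(y)=x}\Theta(W_k,y)$ satisfies $\int_{\BB^m}|\theta_k-n|\,d\LL^m\to 0$ for some integer $n$, whence $\theta=n$. The paper spells out exactly this point in Remark~\ref{remark}, and even the paper's own Lemma~\ref{Lemma} (the key step of Theorem~\ref{Mod2CompatibilityTheorem}) relies on it; it is the nontrivial heart of Allard's theorem, and your sketch does not supply it.
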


Here we prove:

\begin{theorem}\label{Mod2CompatibilityTheorem}
Suppose $V(i)$ is a sequence of integral varifolds that converge with locally bounded first variation to an integral varifold $V$.
If the boundaries $\partial [V(i)]$ converge (as mod $2$ flat chains) to a limit chain $\Gamma$,
then
\[
     [V(i)] \to [V]
\]
and therefore $\partial [V] = \Gamma$.
\end{theorem}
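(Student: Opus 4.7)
The plan is to extract a subsequential flat limit $A$ of $[V(i)]$ and then identify $A=[V]$.  Since $V(i)\to V$ as varifolds, $\mu_{[V(i)]}(W)\le\mu_{V(i)}(W)$ is uniformly bounded for every $W\subset\subset U$; hypothesis~(2) makes $\mu_{\partial[V(i)]}(W)$ uniformly bounded as well.  The compactness theorem for mod~$2$ flat chains then supplies a subsequence $[V(i)]\to A$ in flat topology, with $A$ rectifiable by Fleming's theorem.  It suffices to show that any such $A$ equals $[V]$.

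Lower semicontinuity of mass under flat convergence combined with $\mu_{[V(i)]}\le\mu_{V(i)}$ yields $\mu_A\le\mu_V$.  Consequently $A$ is carried by the rectifiable support of $V$, $\Theta(A,x)\le\Theta(V,x)$ for $\HH^m$-a.e.\ $x$, and at $\HH^m$-a.e.\ $x$ in the $m$-rectifiable support of $V$ (say with approximate tangent plane $P=P(x)$ and multiplicity $k=\Theta(V,x)$) one has the blow-up convergences $(\eta_{x,\rho})_\#V\to k\vv(P)$ and $(\eta_{x,\rho})_\#A\to\Theta(A,x)[P]$ as $\rho\to 0$, where $\eta_{x,\rho}(y)=(y-x)/\rho$.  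Hence $A=[V]$ reduces to proving $\Theta(A,x)=[\Theta(V,x)]$ at $\HH^m$-a.e.\ such $x$.

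I would establish this density identity by a diagonal blow-up.  Requiring additionally that the weak-$*$ limits $\nu$ of $\|\delta V(i)\|$ and $\sigma$ of $\mu_{\partial[V(i)]}$ satisfy $\nu(\BB(x,r))=O(r^m)$ and $\sigma(\BB(x,r))=o(r^{m-1})$---both automatic $\mu_V$-a.e.\ by Radon--Nikodym decompositions with respect to the $m$-rectifiable measure $\mu_V$---one finds by the usual diagonal argument $i_j\to\infty$ and $\rho_j\to 0$ for which the rescaled varifolds $W_j:=(\eta_{x,\rho_j})_\#V(i_j)$ converge to $k\vv(P)$ with $\|\delta W_j\|\to 0$ and $\M(\partial[W_j])\to 0$, while the rescaled chains $[W_j]=(\eta_{x,\rho_j})_\#[V(i_j)]$ converge in the flat topology to $\Theta(A,x)[P]$.

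The main obstacle is the following \emph{key local lemma}: if integral varifolds $W_j$ with $\|\delta W_j\|$ and $\M(\partial[W_j])$ locally bounded converge to a multiplicity-$k$ plane $k\vv(P)$, then $[W_j]\to[k][P]$ in the flat topology.  Granted the lemma and applied to the diagonal above, $\Theta(A,x)[P]=[k][P]$, so $\Theta(A,x)=[\Theta(V,x)]$.  To prove the lemma, let $\pi:\RR^N\to P$ be orthogonal projection and split $[W_j]-[k][P]=([W_j]-\pi_\#[W_j])+(\pi_\#[W_j]-[k][P])$.  The first summand vanishes in flat norm by the standard linear-homotopy cone estimate: $\spt W_j$ lies in an $\eps_j$-tube about $P$ with $\eps_j\to 0$, and the cone via $h(t,y)=(1-t)y+t\pi(y)$ bounds the flat norm of $[W_j]-\pi_\#[W_j]$ by $C\eps_j(\M([W_j])+\M(\partial[W_j]))\to 0$.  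For the second summand, $\pi_\#W_j$ is an integer-density $m$-varifold in the $m$-dimensional $P$, hence of the form $f_j\HH^m$ with $f_j$ integer-valued and $|\nabla f_j|=\|\delta(\pi_\#W_j)\|$ locally bounded; BV compactness together with the weak convergence $f_j\HH^m\to k\HH^m$ forces $f_j\to k$ in $L^1_{\mathrm{loc}}$, and integer-valuedness then gives $[f_j]\to[k]$ in $L^1_{\mathrm{loc}}$, so $\pi_\#[W_j]=[\pi_\#W_j]\to[k][P]$ in mass.
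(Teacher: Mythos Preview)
Your overall architecture matches the paper's exactly: extract a subsequential flat limit $A$, obtain $\mu_A\le\mu_V$, blow up at a generic point to reduce to a local statement about integral varifolds converging to a multiplicity-$k$ plane, and settle that local statement by projecting onto $P$. The gap is in your proof of the key local lemma, in two places. First, varifold convergence $W_j\to k\,\vv(P)$ does not force $\spt W_j$ into an $\eps_j$-tube about $P$; it only gives $\mu_{W_j}(K)\to 0$ for compact $K$ disjoint from $P$. This is repairable by splitting into near-$P$ and far-$P$ pieces, but the homotopy estimate as written does not apply. Second, and more seriously, you assert that $|\nabla f_j|=\|\delta(\pi_\#W_j)\|$ is locally bounded, apparently inferring this from control on $\|\delta W_j\|$. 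But first variation does not commute with push-forward by a projection: one has $\delta(\pi_\#W)(X)=\int(\operatorname{div}X)(\pi y)\,|J\pi|_{T}|\,dW(y,T)$ whereas $\delta W(X\circ\pi)=\int\operatorname{div}_T(X\circ\pi)\,dW(y,T)$, and the two integrands differ wherever the tangent plane $T$ is tilted off $P$. Any attempt to bound the discrepancy by tilt-excess must first localize (since $X\circ\pi$ is not compactly supported), and restricting $W_j$ to a ball then introduces order-one boundary contributions to the first variation that do not vanish. So the BV bound is unjustified.

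This step is precisely what the paper isolates as ``a very nontrivial fact\ldots a key part of the proof\ldots of the closure theorem for integral varifolds,'' and handles by invoking \cite{SimonBook}*{Lemma~42.9}, which yields directly that the projected multiplicity $\theta_i$ converges to $n$ in $L^1$. Your BV-compactness argument, if it worked, would amount to an elementary substitute for that deep lemma; as written it does not. A smaller issue: your diagonal conditions are stated for the weak-$*$ limits $\nu,\sigma$ of $\|\delta V(i)\|$ and $\mu_{\partial[V(i)]}$, but $\nu(\BB(x,r))=O(r^m)$ gives no control on $\|\delta V(i_j)\|(\BB(x,\rho_j))$ for the particular pair $(i_j,\rho_j)$ chosen in the diagonal. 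The paper instead uses the condition $\liminf_i\|\delta V(i)\|\BB(x,r)\le cr^m$, itself extracted from the proof of Lemma~42.9, which is what is actually needed to achieve $\|\delta W_j\|\to 0$ after rescaling.
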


The last assertion ($\partial [V]=\Gamma$) follows because the boundary operator is continuous
with respect to flat convergence.

The result is already interesting in the case where $\partial [V(i)] =0$ for all $i$.

\begin{proof}
Since $V$ is rectifiable, there is a countable union $\cup\MM$ of $m$-dimensional $C^1$ embedded manifolds
such that
\[
   \mu_V(U\setminus \cup\MM) = 0
\]
Without loss of generality, we may assume that the manifolds in $\MM$ are disjoint.

By the compactness theorem for flat chains of locally finite mass
   (see Theorem~\ref{CompactnessTheorem}), 
a subsequence of the $[V(i)]$ will converge to such a flat chain $A$.  
(Here and throughout the proof, ``flat chain'' means ``mod $2$ flat chain".) 
Using the rectifiability theorem (see Theorem~\ref{RectifiabilityTheorem}),
 we can conclude that $A$ is rectifiable. 

We remark that here one may prove rectifiability of $A$ directly (without invoking 
 Theorem~\ref{RectifiabilityTheorem}).  One sees that as follows.
 By the lower-semicontinuity of mass
with respect to flat convergence, the inequality 
\[
    \mu_{[V(i)]} \le \mu_{V(i)}
\]
implies that
\begin{equation}\label{e:MeasureInequality}
   \mu_A \le \mu_V
\end{equation}
and therefore that
\begin{equation}\label{e:ConcentrationOnM}
  \mu_A(\RR^N\setminus \cup\MM) \le \mu_V(\RR^N\setminus \cup\MM) = 0.
\end{equation}
Hence $A$ is rectifiable.

To show that $A=[V]$, it suffices by~\eqref{e:MeasureInequality}  to show that
\begin{equation}\label{e:DensityInequality}
   \text{$\Theta(\mu_V,x) - \Theta(\mu_A,x)$ is an even integer}  
\end{equation}
for $\mu_V$-almost every $x\in U$.   
By~\eqref{e:ConcentrationOnM}, 
 it suffices to show that \eqref{e:DensityInequality} holds for $\mu_V$-almost every $x\in \cup\MM$.

For $\HH^m$-almost $x\in \cup\MM$ (and therefore in particular for $\mu_V$-almost every 
 $x\in\cup\MM$) we have:
\begin{equation}\label{e:FirstBlowUp}
\begin{split}
  &\eta_{x,\lambda\#}V \to \Theta(V,x) \vv(P), \\
  &\eta_{x,\lambda\#}A \to \Theta(A,x) [P]
\end{split}
\end{equation}
as $\lambda\to 0$, where $P$ is the tangent plane at $x$ to the unique $M\in \MM$ that
contains $x$.  Here $\eta_{x,\lambda}:\RR^N\to\RR^N$ is translation by $-x$ followed by
dilation by $1/\lambda$:
\begin{equation*}
 \eta_{x,\lambda}(y) = \frac1{\lambda}(y-x).
\end{equation*}

The proof of Lemma~42.9 in \ocite{SimonBook} shows that $\mu_V$-almost every $x$ has an additional property, namely
\begin{equation}\label{e:ControlledBoundaryDensity}
 \text{ $\liminf_i \| \delta V(i)\| \BB(x,r) \le cr^m$ for all $r\in (0,1)$}
\end{equation}
where $c=c(x)<\infty$.

We will complete the proof by showing that if $x$ has properties~\eqref{e:FirstBlowUp} 
and~\eqref{e:ControlledBoundaryDensity}, then $\Theta(V,x)$ and
$\Theta(A,x)$ differ by an even integer.

For each fixed $\lambda$, 
\begin{equation}\label{e:FixedLambda}
\begin{split}
  &\eta_{x,\lambda\#}V(i) \to \eta_{x,\lambda\#}V \\
  &\eta_{x,\lambda\#}A(i) \to \eta_{x,\lambda\#}A.
\end{split}
\end{equation}
Thus a standard diagonal argument (applied to~\eqref{e:FirstBlowUp} and~\eqref{e:FixedLambda}) shows that there is a sequence $\lambda(i)\to 0$ such that 
\begin{equation}\label{e:PairConvergence}
\begin{split}
   &\tilde V(i) \to \Theta(V,x) \vv(P) \\
   &[\tilde V(i)] \to \Theta(A,x) [P]
\end{split}
\end{equation}
where
\begin{equation*}
   \tilde V(i) = \eta_{x, \lambda(i)\#}V(i).
\end{equation*}
(One does not need to pass to a subsequence to achieve this. Rather, one simply chooses the 
$\lambda_i$'s to go to $0$ sufficiently slowly.)

Note that $\|\delta V(i)\|\BB(0,r)$ scales like $r^{m-1}$.  Thus~\eqref{e:ControlledBoundaryDensity}
implies that for each $r$,
\[
  \liminf_{i\to \infty} \| \delta \tilde V(i) \| \BB(0,r) = 0.
\]
By passing to a further subsequence, we can assume that the liminf is in fact a limit, so that
\begin{equation}\label{e:VanishingBoundary}
   \| \delta \tilde V(i) \|  \to 0
\end{equation}
as radon measures. (For example, one can choose $i_1 < i_2 < i_3 <\dots$ so that
\[
   \| \delta \tilde V(i_k) \| \BB(0,k) < 1/k
\]
for each $k$.)

Thus we will be done if we can show that~\eqref{e:PairConvergence} 
and~\eqref{e:VanishingBoundary} imply
that $\Theta(V,x)-\Theta(A,x)$ is an even integer.   That is, we have reduced
the theorem to the special case described in the following lemma. 
\end{proof}

\begin{lemma}\label{Lemma}
Suppose
\begin{enumerate}[\upshape (i)]
\item\label{LemmaHypothesis:VarifoldConvergence}
A sequence $V(i)$ of integral varifolds converges to the varifold $V=n\vv(P)$, where  $n$ is a nonnegative integer
and $P$ is an $m$-dimensional linear subspace of $\RR^N$.  
\item\label{LemmaHypothesis:VanishingBoundary} The radon measures $\| \delta V(i) \|$ converge to 
  $0$. 
\item\label{LemmaHypothesis:FlatConvergence}
 The associated mod $2$ flat chains $[V(i)]$ converge to $A= a [P]$, where $a\in \ZZ_2$.
\end{enumerate}
Then $a=[n]$.  
\end{lemma}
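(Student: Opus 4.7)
My plan is to reduce to a codimension-zero problem via the orthogonal projection $\pi:\RR^N\to P$, and then to invoke the fact that integer-valued BV functions of small total variation are essentially constant. The monotonicity formula together with the bound on $\|\delta V(i)\|$ shows that $\spt V(i)\cap \pi^{-1}(K)$ stays in a uniformly bounded cylinder for each compact $K\subset P$, so the push-forwards are well defined locally; using $\pi|_P=\operatorname{id}_P$ and $|J_P\pi|=1$, one checks that $\pi_\# V(i)\to n\vv(P)$ as varifolds in $P$ and $\pi_\#[V(i)]=[\pi_\# V(i)]\to \pi_\# A=a[P]$ in the flat topology on $P$.

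The key technical step is to show $\|\delta(\pi_\# V(i))\|(K)\to 0$ for every compact $K\subset P$. For a smooth compactly supported test vector field $X$ on $P$, setting $\tilde X=X\circ\pi$, one has
\[
\delta(\pi_\# V(i))(X) = \int \operatorname{div}_P X(\pi(x))\,|J_S\pi|\,dV(i)(x,S),
\qquad \delta V(i)(\tilde X) = \int \operatorname{div}_S \tilde X\,dV(i)(x,S),
\]
and a direct expansion in an orthonormal basis of $S$ shows that the two integrands differ by $O(|S-P|\cdot\|X\|_{C^1})$. Because $V(i)\to n\vv(P)$ as varifolds, $\int \phi(x)|S-P|\,dV(i)(x,S)\to 0$ on compact subsets for any continuous cutoff $\phi$, and $\delta V(i)(\tilde X)\to 0$ by hypothesis (ii); therefore $\delta(\pi_\# V(i))(X)\to 0$ for every such $X$, and $\|\delta(\pi_\# V(i))\|\to 0$ as Radon measures. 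This first-variation push-forward estimate is the main obstacle; once it is in hand, the rest is essentially routine.

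Finally, write $\pi_\# V(i)=\theta_i\,\HH^m|_P$ with $\theta_i:P\to\ZZ^+$ locally integrable. In codimension zero, integration by parts gives $\|\delta(\pi_\# V(i))\|=|D\theta_i|$, so $|D\theta_i|(K)\to 0$; combined with $\int_K\theta_i\to n\HH^m(K)$, BV compactness forces every $L^1_{\mathrm{loc}}$-subsequential limit of $\theta_i$ to be locally constant with mean $n$, whence $\theta_i\to n$ in $L^1_{\mathrm{loc}}(P)$. Since $\theta_i$ is integer-valued, $|\{\theta_i\neq n\}|\to 0$ on compact subsets, hence $[\theta_i]\to [n]$ in $L^1_{\mathrm{loc}}(P;\ZZ_2)$ and therefore $\pi_\#[V(i)]\to [n][P]$ in the flat topology. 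Comparing with the limit $\pi_\#[V(i)]\to a[P]$ from the first paragraph yields $a=[n]$.
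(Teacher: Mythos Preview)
Your argument has a genuine gap at the step ``therefore $\delta(\pi_\#V(i))(X)\to 0$ for every such $X$, and $\|\delta(\pi_\#V(i))\|\to 0$ as Radon measures.''  Your comparison of $\delta(\pi_\#V(i))(X)$ with $\delta V(i)(\tilde X)$ yields an error bounded by a constant times $\|X\|_{C^1}\int|S-P|\,dV(i)$, so for each \emph{fixed} smooth $X$ you obtain $\delta(\pi_\#V(i))(X)\to 0$; that is distributional convergence $D\theta_i\to 0$.  But $\|\delta(\pi_\#V(i))\|(K)=|D\theta_i|(K)$ is the supremum of $|\delta(\pi_\#V(i))(X)|$ over all $X$ with $|X|\le 1$ and $\spt X\subset K$, and such $X$ can have arbitrarily large $C^1$ norm, so your estimate gives no control of this quantity.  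Distributional convergence $D\theta_i\to 0$, together with $\int_K\theta_i\to n\,\HH^m(K)$ and integrality of $\theta_i$, does \emph{not} force $\theta_i\to n$ in $L^1_{\mathrm{loc}}$: for $n\ge 1$ the functions $\theta_i(y)=n+(-1)^{\lfloor iy\rfloor}$ on $[0,1]$ satisfy all three conditions yet $|D\theta_i|([0,1])\to\infty$ and $\theta_i\not\to n$ in $L^1$.  Ruling out such oscillation from hypotheses~(i) and~(ii) is precisely the hard analytic input you are trying to bypass; an honest bound on $|D\theta_i|(K)$ in this situation is essentially the core estimate behind Allard's closure theorem.  The paper's proof does not attempt a BV argument: it restricts to a ball, projects, and then invokes \cite{SimonBook}*{Lemma~42.9} (the key technical lemma in the proof of Allard's integrality theorem) to obtain $\LL^m\{\theta_i\ne n\}\to 0$ directly.

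There is also a smaller problem: your claim that monotonicity confines $\spt V(i)\cap\pi^{-1}(K)$ to a uniformly bounded cylinder is false in general.  For example, $V(i)=n\,\vv(P)+\vv(S_i)$, with $S_i$ an $m$-sphere of radius $i^{-2}$ centered at distance $i$ from $P$, satisfies (i)--(iii) (all three hypotheses are tested only against compactly supported functions), yet $\spt V(i)$ meets $\pi^{-1}(0)$ at distance roughly $i$.  This can be repaired by first restricting to a ball or slab, or by multiplying $\tilde X$ by a vertical cutoff; but the resulting boundary or cutoff terms do not help with the main gap above, since they still leave you with only distributional convergence of $D\theta_i$.
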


\begin{proof}
We may assume that $P=\RR^m\times (0)^{N-m}\subset \RR^N$.  Let 
\begin{equation}
  \pi: \RR^N \cong \RR^m\times \RR^{N-m} \to \RR^m 
\end{equation}
be the orthogonal projection map.

Hypothesis~(\ref{LemmaHypothesis:FlatConvergence}) implies that for almost every $R>0$,
\begin{equation}\label{e:alimit1}
    [V(i)] \llcorner \BB^N(0,R) \to a[P] \cap \BB^N(0,R) = a[P\cap \BB^N(0,R)].
\end{equation}
We can assume that this is the case for $R=1$. (Otherwise dilate by $1/R$.)
We write $\BB$ for $\BB^N(0,R)=\BB^N(0,1)$.

Let $W(i)=V(i)\llcorner \BB$.  By~\eqref{e:alimit1},
\begin{equation}\label{e:alimit2}
   [\pi_\#W(i)] = \pi_\#[W(i)] \to a[\BB^m],
\end{equation}
where $\BB^m=\BB^m(0,1)$.
Also,
\[
     W(i) \to V \llcorner \BB = n \vv(P\cap \BB)
\]
and therefore 
\begin{equation}\label{e:piW}
   \pi_\#W(i) \to n \vv(\BB^m).
\end{equation}
Note that
\begin{equation}\label{e:piWandtheta}
   \pi_\#W(i) = \vv(\BB^m,\theta_i)
\end{equation}
where
\[
   \theta_i(x) = \sum_{y\in \BB \cap \pi^{-1}x} \Theta(W(i),y).
\]
From hypotheses~\eqref{LemmaHypothesis:VarifoldConvergence}
and~\eqref{LemmaHypothesis:VanishingBoundary}, it follows that 
\begin{equation}\label{e:Qsmall}
   \LL^m Q_i \to 0
\end{equation}
where
\begin{equation}\label{e:Qdef}
  Q_i = \{ x\in \BB^m: \theta_i(x) \ne n \}.
\end{equation}
(This is a very nontrivial fact.  Indeed, it is a key part of the proof given in \cite{SimonBook} of the
closure theorem for integral varifolds.  See Remark~\ref{remark} below for a more 
detailed discussion.)

Now
\[
  [\pi_\#W(i)]  = [ \{x\in \BB^m: \text{$ \theta_i(x)$ is odd} \}].
\]
Thus
\[
  [\pi_\#W(i)] - [ n \vv(\BB^m) ] = [ \{x\in \BB^m: \text{$ \theta_i(x) - n$ is odd} \}],
\]
and so (by \eqref{e:Qsmall} and \eqref{e:Qdef})
\[
 M( [\pi_\#W(i)] - [ n\vv(\BB^m)] ) \le \LL^m(Q_i) \to 0.
\]
 Consequently,
\[
   [\pi_\#W(i)] \to [ n\vv(\BB^m)].
\]
This together with \eqref{e:alimit2} implies that $a[\BB^m] = [ n \vv(\BB^m)]$ and thus that 
 $a=[n]$.
\end{proof}

\begin{remark}\label{remark}
Here we elaborate on statement~\eqref{e:Qsmall} of the proof above, because it may not be immediately
apparent to one who reads~\cite{SimonBook} that the lemma we cite (Lemma~42.9) does actually justify that step.
Note that
\begin{equation}\label{e:thetaintegral}
  \int_{\BB^m}\theta_i \to n \LL^m(\BB^m)
\end{equation}
by \eqref{e:piW} and \eqref{e:piWandtheta}.
Let $\eps>0$.  Write
\begin{equation}\label{e:FplusG}
    \theta_i(x) 
      = F_{i,\eps}(x) + G_{i,\eps}(x)
\end{equation}
where
\[
 F_{i,\eps}(x) = \sum \left\{ \Theta(W(i),y): y\in \BB\cap \pi^{-1}(x),\,  |y|<\eps \right\} 
 \]
and
\[
   G_{i,\eps}(x) = \sum \left\{ \Theta(W(i),y) :   y\in \BB\cap \pi^{-1}(x), \, |y|\ge \eps \right\}.
\]

Now
\begin{equation}\label{e:Gto0}
   \int G_{i,\eps} \to 0
\end{equation}
since $W(i)\to n \vv(P)$.  This together with~\eqref{e:thetaintegral} implies that
\begin{equation}\label{e:Fintegral}
  \int F_{i,\eps} \to n \LL^m(\BB^m).
\end{equation}
   
According to \cite{SimonBook}*{Lemma 42.9}, 
\begin{equation}\label{e:Slemma}
  \limsup_{i\to\infty} \int_{\BB^m} (F_{i,\eps} - n)^+\,d\LL^m \le  \omega(\eps)
\end{equation}
for some function $\omega(\cdot)$ such that $\omega(\eps)\to 0$ as $\eps\to 0$.

(Note: there is a mistake in the statement of~\cite{SimonBook}*{Lemma 42.9}: 
instead of~\eqref{e:Slemma}, it asserts the weaker inequality
\[
   \limsup_{i\to\infty} \LL^m\{ x\in \BB^m: F_{i,\eps}(x)>n \}  \le \omega(\eps).
\]
However, the proof of~\cite{SimonBook}*{Lemma 42.9}  establishes the stronger 
statement~\eqref{e:Slemma}.   Indeed the stronger
statement is essential in the proof of
  Allard's integrality theorem~\cite{SimonBook}*{\S42.8}.  In particular,
the stronger statement is used in line (8) of that proof.)

From \eqref{e:Fintegral} and \eqref{e:Slemma}, we see that
\[
   \limsup_{i\to\infty} \int_{\BB^m}|F_{i,\eps} - n |\,d\LL^m \le \omega(\eps)
\]
This together with \eqref{e:Gto0} and  \eqref{e:FplusG} implies that
\begin{equation}\label{e:penultimate}
   \limsup_{i\to\infty}\int_{\BB^m} | \theta_i - n |\,d\LL^m \le \omega(\eps).
\end{equation}
Letting $\eps\to 0$ gives
\[
  \limsup_{i\to\infty} \int_{\BB^m} | \theta_i - n|\,d\LL^m = 0
\]
and thus (since $\theta_i$ is integer-valued)
\[
   \lim_{i\to\infty} \LL^m \{ x\in \BB^m : \theta_i(x)\ne n \} = 0.
\]
\end{remark}

\begin{theorem}\label{IntegerCompatibilityTheorem}
Suppose $V(i)$ is a sequence of integral varifolds that converge with locally bounded first variation to an
integral varifold $V$.  Suppose $A(i)$ is a sequence of integer-multiplicity rectifiable 
currents such
that $V(i)$ and $A(i)$ are compatible.   If the boundaries $\partial A(i)$ converge
(in the integral flat topology) 
to a limit integral flat chain $\Gamma$, then there is a subsequence $i(k)$ such that
the $A(i(k))$ converge to an integer-multiplicity rectifiable current $A$.  Furthermore,
   $V$ and $A$ must then be 
compatible,  and $\partial A$ must equal $\Gamma$.
\end{theorem}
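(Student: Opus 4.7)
The plan is to reduce compatibility of the limits to an application of Theorem~\ref{Mod2CompatibilityTheorem}, using the fact that compatibility of $A(i)$ and $V(i)$ means they define the same mod~$2$ flat chain.

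First I would extract the subsequence. Because $A(i)$ and $V(i)$ are compatible, $\mu_{A(i)} \le \mu_{V(i)}$, so the hypothesis that $V(i)$ has locally uniformly bounded mass (part of convergence with locally bounded first variation) forces $M_W(A(i))$ to be uniformly bounded for each $W\subset\subset U$. Combined with the assumed boundedness of $\partial A(i)$ (which converges in the flat topology to $\Gamma$), the Federer--Fleming compactness theorem for integer-multiplicity rectifiable currents yields a subsequence $A(i(k))$ converging in the integral flat topology to an integer-multiplicity rectifiable current $A$. Continuity of $\partial$ under flat convergence immediately gives $\partial A = \Gamma$.

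The core step is compatibility of $A$ and $V$. By the definition of compatibility, $[V(i)] = [A(i)]$ as mod~$2$ flat chains, so
\[
  \partial [V(i)] = \partial [A(i)] = [\partial A(i)].
\]
Since $\partial A(i) \to \Gamma$ in the integer flat topology, the mod~$2$ reductions $[\partial A(i)]$ converge in the mod~$2$ flat topology (the reduction map is $1$-Lipschitz from the integer flat norm to the mod~$2$ flat norm). Thus the hypotheses of Theorem~\ref{Mod2CompatibilityTheorem} are satisfied by the subsequence $V(i(k))$, giving
\[
  [V(i(k))] \to [V].
\]
On the other hand, the same Lipschitz property of mod~$2$ reduction implies that integer flat convergence $A(i(k)) \to A$ forces $[A(i(k))] \to [A]$ in the mod~$2$ flat topology. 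Since $[V(i(k))] = [A(i(k))]$ for every $k$, the two limits must agree: $[A] = [V]$. That is exactly the condition that $A$ and $V$ are compatible.

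The main obstacle I anticipate is verifying that the mod~$2$ reduction is continuous with respect to flat convergence in the locally-flat (non-compactly-supported) setting used here; one must check that for any integer flat chain $T$, $\M([T]) \le \M(T)$ and hence the flat norm (defined via decompositions $T = R + \partial S$) satisfies the analogous inequality locally. Once this continuity is in hand, the proof consists mostly of combining the previous theorem with routine compactness, as outlined above.
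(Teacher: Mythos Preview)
Your proposal is correct and matches the paper's own ``alternative'' argument essentially line for line: extract a convergent subsequence of the $A(i)$ via the compactness theorem (using $\mu_{A(i)}\le\mu_{V(i)}$ for the mass bound and the assumed convergence of $\partial A(i)$), then deduce compatibility of the limits from Theorem~\ref{Mod2CompatibilityTheorem} together with the observation that compatibility is equivalent to agreement of the associated mod~$2$ chains. The only minor point is that the classical Federer--Fleming statement asks for mass bounds on $\partial A(i)$ rather than flat convergence; the paper handles this by invoking its Theorem~\ref{CompactnessTheorem}, which is formulated precisely under the hypothesis of boundary convergence.
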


The proof is exactly analogous to the proof of Theorem~\ref{Mod2CompatibilityTheorem}.
Alternatively, one can argue as follows.  
The existence of a subsequence $A(i(k))$ that converges to an integer-multiplicity rectifiable current
$A$ follows from the compactness theorem for such currents (see 
 Theorems~\ref{CompactnessTheorem} and~\ref{RectifiabilityTheorem}).
The ``furthermore'' statement then follows immediately from 
  Theorem~\ref{Mod2CompatibilityTheorem}, together with
the observation that an integral varifold and an integer-multiplicity rectifiable current are 
compatible if and only if they determine the same mod $2$ rectifiable flat chain.

\section{Application to Mean Curvature Flow}

Here we show how the results of this paper rule out certain kinds of singularities
in mean curvature flows.   In another paper, 
we will use similar arguments  to  prove, under mild hypotheses, 
boundary regularity at all times for hypersurfaces moving by mean curvature.  

On both theoretical and experimental grounds, 
grain boundaries in certain annealing metals are believed to move by mean curvature 
flow  \cite{BrakkeBook}*{Appendix A}.   In such metals, one typically sees triple junctions  where three smooth surfaces come together at equal angles along a smooth curve.  Of course one also sees such triple junctions
in soap films, which are equilibrium solutions to mean curvature flow.

Consider the following question:  can an initially smooth surface evolve under mean curvature flow
so as later to develop triple junction type singularities?   More generally, can such a surface have as a blow-up flow
(i.e., a limit of parabolic blow-ups) a static configuration of  $k$ half-planes (counting multiplicity) meeting along a common edge?  
Using Theorem~\ref{Mod2CompatibilityTheorem}, we can (for a suitable formulation of mean curvature flow) prove that the answer is ``no'' if $k$ is odd.

(Suppose $\MM$ is a Brakke flow, $X_i$ is a sequence of spacetime points converging to $X=(x,t)$ with $t>0$, and 
  $\lambda_i$ is a sequence of numbers tending to infinity.  Translate $\MM$ 
in spacetime by $-X_i$ and then dilate parabolically
  by $\lambda_i$ to get a flow $\MM_i$.  
  A {\bf blow-up flow}
of $\MM$ is any Brakke flow that can be obtained as a subsequential limit of such a sequence.)

Let $I\subset \RR$ be an interval, typically either $[0,\infty)$ or all of $\RR$.
Recall that a Brakke flow $t\in I \mapsto V(t)$ of varifolds is called an {\bf integral Brakke flow} provided
$V(t)$ is an integral varifold for almost all $t\in I$.  (See~\cite{BrakkeBook}*{\S3}
or~\cite{IlmanenBook}*{\S6}
for the definition of Brakke flow.)

\begin{definition}
Let $t\mapsto V(t),\, t\in I$ be an integral Brakke flow in $U\subset \RR^N$.
We say that $V(\cdot)$ is {\bf cyclic mod $2$} (or {\bf cyclic} for short) provided $\partial [V(t)] =0$ for almost every $t\in I$.

More generally, suppose $W$ is an open subset of $U$ and $J$ is a subinterval of $I$.  We say that
the Brakke flow $V(\cdot)$ is {\bf cyclic mod $2$ in $W\times J$} if for almost all $t\in J$, 
$[V(t)]$ has no boundary in $W$.  
\end{definition}

We have:

\begin{theorem}\label{theorem:LimitOfCyclicFlows}
Suppose $t\mapsto V_i(t)$ is a sequence of integral Brakke flows that converge as Brakke flows
to an integral Brakke flow $t\mapsto V(t)$.   
If the flows $V_i(\cdot)$ are cyclic mod $2$, then so is the flow 
 $V(\cdot)$.  
 If the flows $V_i(\cdot)$ are cyclic mod $2$ in $W\times J$, then so is the flow 
 $V(\cdot)$.

\end{theorem}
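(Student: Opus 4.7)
The plan is to reduce the statement to a time-by-time application of Theorem~\ref{Mod2CompatibilityTheorem}. Convergence of integral Brakke flows $V_i(\cdot)\to V(\cdot)$ yields, after extracting a subsequence if necessary, that $V_i(t)\to V(t)$ as integral varifolds for almost every $t\in I$. Moreover, the cyclic hypothesis $\partial[V_i(t)]=0$ holds for all $i$ simultaneously outside the null set $\bigcup_i N_i$, where $N_i$ is the exceptional set for $V_i(\cdot)$. At such a time $t$, hypothesis~(2) of Theorem~\ref{Mod2CompatibilityTheorem} is trivially satisfied by the constant boundary sequence $\Gamma=0$.

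The remaining ingredient is convergence with locally bounded first variation in the sense of Definition~\ref{NiceDefinition}, i.e.,
\[
    \limsup_{i\to\infty} \|\delta V_i(t)\|(W) < \infty
\]
for every $W\subset\subset U$, at almost every $t$. This is a standard consequence of the defining Brakke inequality, which bounds $\int\!\int |H|^2\,d\mu_{V_i(t)}\,dt$ uniformly in $i$ on compact spacetime slabs. By Fatou there is a locally integrable function of $t$ dominating $\liminf_i \int_W |H|^2\,d\mu_{V_i(t)}$; hence for almost every $t$ one can extract a further $t$-dependent subsequence along which $\int_W |H|^2\,d\mu_{V_i(t)}$ is uniformly bounded, and Cauchy--Schwarz then controls $\|\delta V_i(t)\|(W)=\int_W |H|\,d\mu_{V_i(t)}$. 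At any such ``good'' time $t$, Theorem~\ref{Mod2CompatibilityTheorem} applies and delivers $[V_i(t)]\to[V(t)]$ together with $\partial[V(t)]=0$.

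Since this holds at almost every $t\in I$, the flow $V(\cdot)$ is cyclic mod~$2$. The localized assertion follows by applying the same argument with $U$ replaced by an exhaustion $W'\subset\subset W$ and $I$ replaced by $J$; a countable union over such $W'$ then shows that $[V(t)]$ has no boundary in all of $W$ at almost every $t\in J$. The main obstacle is the verification of the uniform first-variation bound: one must be careful that the $t$-dependent subsequence used to extract this bound is compatible with the global subsequence along which varifold convergence already holds in $t$. However, both properties are stable under passage to further subsequences, so the coordination is routine and the theorem follows.
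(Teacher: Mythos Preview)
Your proof is correct and follows essentially the same approach as the paper, which isolates the first-variation control (Brakke inequality, Fatou, Cauchy--Schwarz) as a separate lemma and then applies Theorem~\ref{Mod2CompatibilityTheorem} time-by-time exactly as you do. One minor point: in the paper's definition of Brakke-flow convergence the varifold convergence $V_{i(k)}(t)\to V(t)$ is itself only along a $t$-dependent subsequence (not a single global one as you suggest), but since the conclusion $\partial[V(t)]=0$ is independent of which subsequence was used, this does not affect the argument.
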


Here convergence as Brakke flows means that for almost all $t$:
\begin{align}
   &\text{$\mu_{V_i(t)} \to \mu_{V(t)}$, and} \\
   \label{e:nice2}&\text{there is a subsequence $i(k)$ (depending on $t$) such that $V_{i(k)}(t) \to V(t)$.}
\end{align}
(This definition may seem peculiar, but this is precisely the convergence that occurs in Ilmanen's compactness theorem for integral Brakke flows \cite{IlmanenBook}*{\S7}.)

Theorem~\ref{theorem:LimitOfCyclicFlows}
follows immediately from Theorem~\ref{Mod2CompatibilityTheorem} and the following lemma 
(which is implicit
in~\cite{IlmanenBook}, but is not actually stated there):

\begin{lemma}
Suppose $t\in I\mapsto V_i(t)$ is a sequence of Brakke flows in $U\subset \RR^N$ that converges
to a Brakke flow $t\mapsto V(t)$.  Then for almost every $t\in I$, there is a subsequence
$i(k)$ such that $V_{i(k)}(t)$ converges with locally bounded first variation to $V(t)$.
Indeed, we can choose the subsequence so that
$\delta V_{i(k)}$ is absolutely continuous with respect to $\mu_{V_{i(k)}}$ and so that
\[
   \sup_{i(k)} \int_{x\in W} |H(V_{i(k)}(t),x)|^2\, d\mu_{V_{i(k)}(t)}x < \infty.
\]
for every $W\subset \subset U$, 
where $H(V_{i(k)}(t),\cdot)$ is the generalized mean curvature of $V_{i(k)}(t)$.
\end{lemma}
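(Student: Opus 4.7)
The plan is to combine the energy inequality for Brakke flows with Fatou's lemma to produce, for $\LL^1$-almost every $t$, a subsequence along which the mean curvature has bounded $L^2$ norm on compacta. Together with the uniform mass bound that is already part of Brakke flow convergence, this yields locally bounded first variation. The varifold limit of such a subsequence must then coincide with $V(t)$ because integral varifolds are uniquely determined by their weight measures.

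The first step is to recall the standard energy estimate. Testing the Brakke inequality with a cutoff $\phi = \psi^2$, for $\psi \in C^1_c(U)$ nonnegative with $\psi \equiv 1$ on a prescribed $W \subset\subset U$, gives on every bounded subinterval $[a,b] \subset I$ a bound of the form
\[
\int_a^b \int_W |H(V_i(t),x)|^2 \, d\mu_{V_i(t)}x\, dt \le C\!\left(\psi,\ \sup_{t \in [a,b]} \mu_{V_i(t)}(\spt \psi)\right).
\]
By Brakke flow convergence, $\sup_i \sup_{t \in [a,b]} \mu_{V_i(t)}(\spt \psi) < \infty$, so the left-hand side is bounded uniformly in $i$. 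Fatou's lemma in the $t$ variable then gives $\liminf_i \int_W |H(V_i(t),\cdot)|^2 d\mu_{V_i(t)} < \infty$ for $\LL^1$-a.e.\ $t$. Exhausting $U$ by relatively compact sets $W_1 \subset W_2 \subset \cdots$, I obtain a full-measure set $T \subset I$ such that for every $t \in T$: the measures $\mu_{V_i(t)}$ converge to $\mu_{V(t)}$; the above $\liminf$ is finite for every $W_n$; and $\delta V_i(t) \ll \mu_{V_i(t)}$ for every $i$ (this last property holds a.e.\ in $t$ for each individual Brakke flow, and the union over $i$ of the exceptional sets is still null).

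Fixing $t \in T$, a diagonal extraction produces a subsequence $i(k)$ with $\sup_k \int_{W_n} |H(V_{i(k)}(t),\cdot)|^2 \, d\mu_{V_{i(k)}(t)} < \infty$ for every $n$. Cauchy--Schwarz combined with the mass bound then yields $\sup_k \|\delta V_{i(k)}(t)\|(W_n) < \infty$ for every $n$, so that $V_{i(k)}(t)$ has locally bounded first variation in the sense of Definition~\ref{NiceDefinition}. Passing to a further subsequence by Radon measure compactness gives a varifold limit $V_{i(k)}(t) \to \tilde V$, and by Theorem~\ref{AllardClosureTheorem} the limit $\tilde V$ is an integral varifold.

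The main --- and only delicate --- obstacle is identifying $\tilde V$ with $V(t)$, since a priori different varifolds can share a weight measure. Here I would invoke the identification $\LLL(U,\ZZ^+) \cong \{\text{integral varifolds}\}$ recalled in the introduction: an integral varifold is encoded by its density function $\Theta(V,\cdot)$, which is recovered pointwise from the weight measure alone as a spherical limit. Since $\mu_{\tilde V} = \lim_k \mu_{V_{i(k)}(t)} = \mu_{V(t)}$ and both $\tilde V$ and $V(t)$ are integral, they must be equal. The subsequence $i(k)$ then has all the properties asserted by the lemma.
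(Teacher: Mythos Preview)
Your argument follows the same route as the paper's: the localized Brakke energy inequality gives a uniform-in-$i$ bound on $\int_J\int_W |H|^2\,d\mu\,dt$, Fatou in $t$ yields $\liminf_i \int_W |H_i(t)|^2\,d\mu_{V_i(t)} < \infty$ for a.e.\ $t$, and Cauchy--Schwarz together with the local mass bound converts this into a local first-variation bound along a subsequence.  The paper stops there and simply asserts that this subsequence converges with locally bounded first variation to $V(t)$; you go further and try to justify why the varifold limit along that subsequence is $V(t)$, which is a genuine subtlety the paper glosses over.

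Your resolution---pass to a varifold limit $\tilde V$, observe $\mu_{\tilde V}=\mu_{V(t)}$ from the full-sequence weight-measure convergence, and use that an integral varifold is determined by its weight measure---is sound, but it tacitly imports hypotheses not in the lemma: that the $V_i(t)$ are integral (needed to apply Allard's closure theorem to get $\tilde V$ integral) and, more seriously, that $V(t)$ is integral.  The latter is exactly what you need to conclude $\tilde V=V(t)$, yet Remark~\ref{StrongerRemark} \emph{derives} integrality of the limit flow from the lemma, so assuming it here would be circular for that purpose.  In the actual application (Theorem~\ref{theorem:LimitOfCyclicFlows}) the limit $V$ is already assumed to be an integral Brakke flow, so your argument is perfectly valid there; but as a proof of the lemma exactly as stated, the identification $\tilde V=V(t)$ remains unjustified---a gap which, to be fair, the paper's own proof shares.
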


\begin{proof}  For simplicity, let us assume that $I=[0,\infty)$. 

Recall that for almost all $t$, the varifold $V_i(t)$ has bounded first variation, and the singular
part of the first variation measure is $0$.  Thus (for such $t$) 
\begin{equation}\label{e:CauchySchwartz}
     \| \delta V_i(t)\| (W) 
       = \int_W |H_{i,t}|\,d\mu_{i,t}  
       \le \left( \int_W |H_{i,t}|^2\,d\mu_{i,t}\right)^{1/2}
            \left(  \vphantom{\int} \mu_{i,t}(W)  \right)^{1/2}.
\end{equation}
where $H_{i,t}$ is the generalized mean curvature of $V_i(t)$ and $\mu_{i,t}=\mu_{V_i(t)}$.

Consider first the case that the varifolds $V_i(t)$ are all supported in some compact set.
Then the initial total masses $\M(V_i(0))=\mu_{V_i(0)}(\RR^N)$ are bounded above by some $C<\infty$.
Since mass decreases under mean curvature flow, the same bound holds for all $t>0$.
By definition of Brakke flow,
\[
   \overline{D}_t \M(V_i(t))  \le - \int |H(V_i(t),\cdot)|^2\,d\mu_{V_i(t)},
\]
so
\begin{equation}\label{e:L2boundonH}
   \int_{t\in I} \int |H(V_i(t), \cdot)|^2 \,d\mu_{V_i(t)}\,dt  \le C.
\end{equation}
Thus by Fatou's theorem,
\[
   \int_{t\in I}\left(  \liminf_i \int |H(V_i(t),\cdot)|^2\,d\mu_{V_i(t)} \right) \,dt \le C
\]
In particular, 
\[
   \liminf_i \int |H(V_i(t),\cdot)|^2\,d\mu_{V_i(t)}  < \infty
\]
for almost every $t$.  For each such $t$, there is a subsequence $i(k)$ such that
\[
   \sup_{k}  \int |H(V_{i(k)}(t),\cdot)|^2\,d\mu_{V_{i(k)}(t)}  < \infty
\]
This together with~\eqref{e:CauchySchwartz} implies that the $V_{i(k)}(t)$ converge
with locally bounded first variation to $V(t)$ (in the sense of Definition~\ref{NiceDefinition}).

The general case (noncompactly supported varifolds) is essentially the same, except that instead
of \eqref{e:L2boundonH} one uses the local bound:
\[
      \sup_i    \int_{t\in J} \int_{x\in W} |H(V_i(t),x)|^2\,d\mu_{V_i(t)}\,dt  < \infty
\]
together with the mass bound
\[
   \sup_i \sup_{t\in J} \mu_{V_i(t)}(W) < \infty,
\]
both of which bounds hold for all intervals $J\subset\subset I$ 
and open subsets $W\subset\subset U$ \cite{EckerBook}*{Proposition 4.9}.
\end{proof}

\begin{remark}\label{StrongerRemark}
The lemma and Allard's closure theorem (Theorem~\ref{AllardClosureTheorem})
imply that
a limit of integral Brakke flows is also integral.   
\end{remark}

\begin{corollary}\label{corollary:NoOddJunctions}
Suppose $k$ is an odd integer.
A static configuration of $k$-half planes (counting multiplicity) meeting along a 
common edge cannot occur as a blow-up flow
to an integral Brakke flow that is cyclic mod $2$.
\end{corollary}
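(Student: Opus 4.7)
The plan is to argue by contradiction, invoking Theorem~\ref{theorem:LimitOfCyclicFlows} to pass the cyclic property to the blow-up, and then computing the mod~$2$ boundary of $k$ half-planes directly to reach a contradiction when $k$ is odd.

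Suppose $t\mapsto V(t)$ is an integral Brakke flow that is cyclic mod $2$, and that some blow-up flow $\MM'$ of $V(\cdot)$ is the static configuration $V'(t)\equiv V_0=\sum_{j}n_j\,\vv(P_j)$, where the $P_j$ are distinct $m$-dimensional half-planes meeting along a common $(m-1)$-dimensional edge $L$ and $\sum_j n_j=k$ is odd. By definition, $\MM'$ is a subsequential Brakke limit of a sequence $\MM_i$ obtained from $\MM$ by translating in spacetime by $-X_i$ and parabolically dilating by $\lambda_i\to\infty$. The first step is to verify that each $\MM_i$ is itself cyclic mod $2$: spacetime translation and parabolic dilation both act on the spatial slice $V_i(t)$ as affine maps $\phi$, and the push-forward $\phi_\#$ commutes with the mod $2$ boundary operator on rectifiable flat chains, so $\partial[V_i(t)] = \phi_\#\partial[V(s)] = 0$ for almost every $t$.

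The second step is to apply Theorem~\ref{theorem:LimitOfCyclicFlows} directly to the sequence $\MM_i\to\MM'$ to conclude that $\MM'$ is cyclic mod $2$. In particular, $\partial[V_0]=0$ as a mod $2$ flat chain.

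The third step is the mod $2$ boundary computation, which yields the contradiction. Because the $P_j$ are distinct half-planes sharing the single edge $L$, we have
\[
  [V_0] = \sum_j [n_j]\,[P_j] = \sum_{j:\, n_j\text{ odd}} [P_j] \pmod 2,
\]
and since each $[P_j]$ is a rectifiable mod $2$ chain with $\partial[P_j]=[L]$,
\[
  \partial[V_0] = m\,[L] \pmod 2,
\]
where $m$ is the number of indices $j$ with $n_j$ odd. The parity of $m$ equals the parity of $\sum_j n_j=k$, which is odd, so $\partial[V_0]=[L]\neq 0$, contradicting the previous step.

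The only nontrivial point I expect to encounter is step one: one must be careful that ``cyclic mod $2$'' is preserved under parabolic rescaling and spacetime translation for almost every time slice, and that the almost-everywhere null set of bad times transforms correctly under the dilation of the time variable. This is a routine measure-theoretic observation but is the only place the argument touches the definition of Brakke flow in a nontrivial way; everything else reduces to Theorem~\ref{theorem:LimitOfCyclicFlows} and a direct parity count.
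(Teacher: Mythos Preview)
Your proposal is correct and follows essentially the same line as the paper's proof: the blow-up flow inherits the cyclic mod~$2$ property via Theorem~\ref{theorem:LimitOfCyclicFlows}, and then the direct computation $\partial[V_0]=[k][L]$ forces $k$ to be even. The paper's proof is more terse---it simply asserts that the static limit flow is cyclic mod~$2$ and that $\partial[V]$ is the edge with multiplicity $[k]$---whereas you spell out the invariance of the cyclic property under spacetime translation and parabolic dilation and handle the parity count by separating the half-planes according to whether their multiplicities are odd; but these are elaborations of the same argument rather than a different route.
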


\begin{proof}
Let $V$ be the varifold corresponding to $k$ 
halfplanes (counting multiplicity) meeting along an edge $E$.
If the static flow $t\mapsto V$ is a limit flow to an integral
Brakke flow that is cyclic mod $2$, then this static flow is also cyclic mod $2$ and thus $\partial [V]=0$.
But $\partial [V]$ is the common edge $E$
with multiplicity $[k]$, so $k$ must then be even.
\end{proof}

The following theorem shows that for rather arbitrary initial surfaces,  there exist nontrivial integral Brakke flows that are cyclic mod $2$.

\begin{theorem}\label{theorem:Mod2FlowExistence}
 Let $A_0$ be any compactly supported rectifiable mod $2$ cycle in $\RR^N$. 
  (For example, $A_0$ could be the 
mod $2$ rectifiable flat chain associated to a $C^1$ compact, embedded submanifold.) 
Then there is an integral Brakke flow $t\in [0,\infty)\mapsto V(t)$ and a one-parameter family
$t\in [0,\infty)\mapsto A(t)$ of rectifiable mod $2$ flat chains with the following properties:
\begin{enumerate}[\upshape (1)]
 \item\label{FirstConclusion} $A(0)=A_0$ and $\mu_{V(0)} = \mu_{A(0)}$.
 \item $\partial A(t)=0$ for all $t$.
 \item $t\mapsto A(t)$ is continuous with respect to the flat topology.
 \item\label{PenultimateConclusion} $\mu_{A(t)} \le \mu_{V(t)}$ for all $t$.
 \item\label{AandVcompatible} $A(t)=[V(t)]$ for almost every $t$.
 \end{enumerate}
\end{theorem}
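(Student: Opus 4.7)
The plan is to realize $V(\cdot)$ as a limit of approximating Brakke flows that are manifestly cyclic mod~$2$, invoke Theorem~\ref{theorem:LimitOfCyclicFlows} to preserve this property in the limit, and then extract the continuous family $A(\cdot)$ from the chains $[V(t)]$ by a flat-norm equicontinuity argument.

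First, I would approximate $A_0$ in the flat topology by chains of the form $A_0^{\eps}=[M_0^{\eps}]$, where each $M_0^{\eps}$ is a smooth compact closed embedded $m$-submanifold of $\RR^N$, chosen so that $\M(A_0^{\eps}) \to \M(A_0)$. Since $M_0^{\eps}$ is smooth and closed, $A_0^{\eps}$ is automatically a mod~$2$ cycle. For each $\eps$ I would produce an integral Brakke flow $V^{\eps}(\cdot)$ starting from $\vv(M_0^{\eps})$ such that $\partial [V^{\eps}(t)] = 0$ for almost every $t$.  This can be arranged either by Ilmanen's elliptic regularization applied to an $(m+1)$-dimensional mod~$2$ filling of $A_0^{\eps}$ (which exists in $\RR^N$ since its mod~$2$ homology is trivial), or by running smooth mean curvature flow up to the first singular time and extending via Brakke's construction thereafter, cyclicity being preserved for free at smooth times.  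By Ilmanen's compactness theorem, after extracting a subsequence, the $V^{\eps}(\cdot)$ converge as Brakke flows to an integral Brakke flow $V(\cdot)$ with $\mu_{V(0)} = \mu_{A_0}$, and Theorem~\ref{theorem:LimitOfCyclicFlows} shows $V(\cdot)$ is cyclic mod~$2$, so $[V(t)]$ is a well-defined mod~$2$ cycle for almost every $t$.

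Next, I would construct $A(\cdot)$.  On the full-measure set $G$ of good times $t$ at which $V(t)$ is integral and $\partial[V(t)]=0$, set $\tilde A(t):=[V(t)]$. The key point is uniform flat-continuity of $t\mapsto \tilde A(t)$ on $G\cap J$ for every compact interval $J\subset [0,\infty)$.  For good times $s<t$, the chain $\tilde A(t)-\tilde A(s)$ is bounded by the spacetime region swept out by the flow, giving an estimate of the form
\[
   \mathbf{F}\bigl(\tilde A(t)-\tilde A(s)\bigr)
       \le \int_s^t \left(\int |H|^2\,d\mu_{V(\tau)}\right)^{1/2}\bigl(\mu_{V(\tau)}(\RR^N)\bigr)^{1/2}\,d\tau,
\]
where $\mathbf{F}$ denotes the flat norm; this is obtained by homotoping $\tilde A(s)$ to $\tilde A(t)$ along the generalized mean-curvature vector field and applying Cauchy-Schwarz.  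Since Brakke's integrated inequality yields $\int|H|^2\,d\mu_{V(\tau)}\,d\tau<\infty$ locally, and the masses $\mu_{V(\tau)}(\RR^N)$ are dominated by $\M(A_0)$, the right-hand side tends to $0$ as $t-s\to 0$.  Hence $\tilde A$ extends uniquely by flat continuity to a family $A(\cdot)$ defined for all $t\ge 0$.

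Finally, I would verify the five properties.  Items (3) and (5) are built into the construction; (2) follows from continuity of $\partial$ under flat convergence applied to the sequence of good times $s\to t$; (4) follows from lower semicontinuity of mass under flat convergence at limit times, combined with the trivial inequality $\mu_{[V(t)]}\le \mu_{V(t)}$ at good times; and (1) is arranged by choosing the approximations so that $\M(A_0^\eps)\to \M(A_0)$ and $A_0^\eps\to A_0$ in the flat topology, then passing varifold and flat-chain convergence to the initial slice.  The main obstacle is the flat-continuity estimate above: Brakke flows may exhibit sudden mass drops, so one must check that the approximation scheme yields enough uniform $L^2$ control on mean curvature (uniform in $\eps$, in the sense used by the Lemma preceding Remark~\ref{StrongerRemark}) to ensure absolutely continuous control of the flat distance, so that $A(\cdot)$ can be extended continuously through the jump times of $V(\cdot)$.
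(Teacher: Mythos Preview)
Your approach diverges from the paper's in a fundamental way, and there is a genuine gap.

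In the paper, properties (1)--(4) are not proved here at all: they are quoted directly from Ilmanen \cite{IlmanenBook}*{8.1, 8.3}.  Ilmanen's elliptic regularization produces, simultaneously, both the Brakke flow $V(\cdot)$ and the family $A(\cdot)$, the latter obtained as time-slices of a single spacetime chain (the translating minimizer in one higher dimension).  Flat continuity of $t\mapsto A(t)$, the cycle condition, and the measure inequality all follow from this spacetime object.  The only new content here is (5), and for that the paper applies Theorem~\ref{Mod2CompatibilityTheorem} to Ilmanen's approximating sequence $(V_i(t),A_i(t))$: these satisfy $\mu_{V_i(t)}=\mu_{A_i(t)}$ (hence $A_i(t)=[V_i(t)]$), and their boundaries eventually leave any compact set, so the compatibility passes to the limit.

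Your plan instead tries to construct $A(\cdot)$ \emph{a posteriori} from the limit flow $V(\cdot)$ by setting $A(t)=[V(t)]$ at good times and extending by flat continuity.  The gap is the continuity estimate
\[
   \mathbf{F}\bigl(\tilde A(t)-\tilde A(s)\bigr)
       \le \int_s^t \left(\int |H|^2\,d\mu_{V(\tau)}\right)^{1/2}\bigl(\mu_{V(\tau)}(\RR^N)\bigr)^{1/2}\,d\tau,
\]
which you justify by ``homotoping $\tilde A(s)$ to $\tilde A(t)$ along the generalized mean-curvature vector field.''  For a general integral Brakke flow no such homotopy exists: a Brakke flow is a family of varifolds satisfying an inequality, not the image of a moving map, and there is no spacetime chain whose boundary realizes $\tilde A(t)-\tilde A(s)$.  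Indeed, Brakke flows can lose mass instantaneously (a multiplicity-two plane can become the zero varifold at a single time), so $t\mapsto [V(t)]$ need not be flat-continuous and the displayed bound can fail outright.  You flag this as ``the main obstacle'' but do not resolve it; the resolution is precisely Ilmanen's construction, where the spacetime filling is part of the data from the outset rather than something recovered from $V(\cdot)$.

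A secondary issue: approximating an arbitrary rectifiable mod~$2$ cycle $A_0$ by smooth closed embedded submanifolds with \emph{mass} (not just flat) convergence, and arranging that the limit Brakke flow has $\mu_{V(0)}=\mu_{A_0}$ exactly, are both nontrivial and would need justification.
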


In particular, the flow is cyclic mod $2$, and thus triple (or more generally odd-multiplicity) junctions
cannot occur in $V(\cdot)$ by Corollary~\ref{corollary:NoOddJunctions}.

(Remark about assertion~\eqref{AandVcompatible}: 
Since $V(\cdot)$ is an integral Brakke flow, $V(t)$ is an integral
varifold for almost all $t$ and thus $[V(t)]$ is well-defined
for almost all $t$.)

\begin{proof}
Except for assertion~\eqref{AandVcompatible}, this was proved 
by~Ilmanen~\cite{IlmanenBook}*{8.1 and 8.3}.   
He used integer-multiplicity currents rather than
 mod $2$ flat chains, but his proof works equally
well in either context.   (The $A(t)$ here is the slice $T_t$ in Ilmanen's notation.)
The flat continuity~(3) is not stated there, but it follows immediately from \cite{IlmanenBook}*{8.3}.

Roughly speaking, Ilmanen constructs $V(\cdot)$ and $A(\cdot)$ as limits of  ``nice" examples
$V_i(\cdot)$ and $A_i(\cdot)$ for which
\[
   \mu_i(t) = \mu_{A(i)}(t)
\]
for all $t$.

Now his $A_i(t)$ are not quite cycles.  
However, $A_i(t)$ moves by translation, and it moves very fast if $i$ is large.
In particular, if $U\subset\subset \RR^N$ and 
 $I\subset\subset (0,\infty)$, then for sufficiently large $i$ and for all $t\in I$,
 $\partial A_i(t)$ lies outside $U$.   

 Thus (exactly as in the proof of Theorem~\ref{theorem:LimitOfCyclicFlows},
 or by Remark~\ref{StrongerRemark} and Theorem~\ref{Mod2CompatibilityTheorem}),
  we deduce (for almost every $t\in I$) that 
     $A(t)\llcorner U = [V(t)]\llcorner U$
 and that $\partial[V(t)]$ lies outside $U$.
 
 Since $U$ is arbitrary, this gives~\eqref{AandVcompatible}.
 \end{proof}
 
 \begin{remark}
 The description just given is a slightly simplified account of Ilmanen's proof.  Actually he does not quite
 get the pair ($V(\cdot), A(\cdot))$ as limits of  nice examples.  Rather he gets a pair of flows
 $(\mu^*(\cdot), A^*(\cdot))$ of one higher dimension
 as such a limit.  
 The argument given above shows that $(\mu^*(\cdot), A^*(\cdot))$ has the property corresponding
 to property~\eqref{AandVcompatible} above (and Ilmanen in his proof shows that it has 
 properties~\eqref{FirstConclusion}-\eqref{PenultimateConclusion}.
 Now the pair $(\mu^*(\cdot), A^*(\cdot))$ is translation
 invariant in one spatial direction.   By slicing, Ilmanen gets the desired pair $(\mu(\cdot), A(\cdot))$.
 Translational invariance implies (in a straightforward way) that 
  properties~\eqref{FirstConclusion}-\eqref{AandVcompatible}
   for $(\mu(\cdot), A(\cdot))$
   are equivalent to the
 corresponding properties for $(\mu^*(\cdot), A^*(\cdot))$.
 \end{remark}
 
Theorem~\ref{theorem:Mod2FlowExistence} has an analog for integer-multiplicity currents in place
of mod $2$ flat chains:

\begin{theorem}\label{theorem:IntegralFlowExistence}
Let $A_0$ be any compactly supported integer-multiplicity cycle (i.e., integer-multiplicity current
with $\partial A_0=0$.)   
Then there is an integral Brakke flow $t\in [0,\infty)\mapsto V(t)$ and a one-parameter family
$t\in [0,\infty)\mapsto A(t)$ of integer-multiplicity currents with the following properties:
\begin{enumerate}[\upshape (1)]
 \item $A(0)=A_0$ and $\mu_{V(0)} = \mu_{A(0)}$.
 \item $\partial A(t)=0$ for all $t$.
 \item $t\mapsto A(t)$ is continuous with respect to the flat topology.
 \item $\mu_{A(t)} \le \mu_{V(t)}$ for all $t$.
 \item $A(t)$ and $V(t)$ are compatible for almost every $t$.
 \end{enumerate}
\end{theorem}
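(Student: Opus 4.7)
The plan is to imitate the proof of Theorem~\ref{theorem:Mod2FlowExistence} almost verbatim, with integer-multiplicity rectifiable currents in place of mod~$2$ rectifiable flat chains and with Theorem~\ref{IntegerCompatibilityTheorem} in place of Theorem~\ref{Mod2CompatibilityTheorem}. Ilmanen's construction in \cite{IlmanenBook}*{8.1 and 8.3} is originally formulated for integer-multiplicity currents, so it can be applied to the given cycle $A_0$ directly, producing the pair $(V(\cdot), A(\cdot))$ (possibly via slicing an auxiliary one-higher-dimensional pair, as in the remark after Theorem~\ref{theorem:Mod2FlowExistence}). Properties~(1)--(4), and the flat continuity in~(3), come out of that construction word-for-word from Ilmanen, independent of the coefficient group. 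The only new statement to prove is property~(5): that $A(t)$ and $V(t)$ are compatible for almost every~$t$.

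For~(5), I would argue exactly as in the proof of Theorem~\ref{theorem:Mod2FlowExistence}. Ilmanen's pair arises as a limit of approximations $(V_i(\cdot), A_i(\cdot))$ with $V_i(t) = \vv(A_i(t))$, so the approximating pair is trivially compatible. Each $A_i(\cdot)$ moves rigidly by translation at a rate that grows with $i$, so for any $U \subset\subset \RR^N$ and $I \subset\subset (0,\infty)$, once $i$ is large enough one has $\partial A_i(t) \cap U = \emptyset$ for every $t \in I$. Combine this with the lemma just before Remark~\ref{StrongerRemark}, which for almost every $t \in I$ produces a subsequence along which $V_i(t)$ converges with locally bounded first variation to $V(t)$. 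The boundaries $\partial(A_i(t) \llcorner U)$ are eventually zero in $U$, hence certainly converge in the integral flat topology, so Theorem~\ref{IntegerCompatibilityTheorem} applies and yields a further subsequence along which $A_i(t) \llcorner U$ converges to an integer-multiplicity current compatible with $V(t) \llcorner U$. Uniqueness of the flat limit (the $A_i(t)$ already converge to $A(t)$) forces this limit to be $A(t) \llcorner U$. Since $U$ and $I$ were arbitrary, $A(t)$ and $V(t)$ are compatible for almost every $t \in (0,\infty)$.

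The main obstacle I anticipate is not in the compatibility argument itself, which is essentially automatic given Theorem~\ref{IntegerCompatibilityTheorem}, but in the bookkeeping associated with Ilmanen's auxiliary higher-dimensional pair $(\mu^*(\cdot), A^*(\cdot))$: one must check that the compatibility property descends through the final slicing step to $(V(\cdot), A(\cdot))$. Translational invariance of the higher-dimensional pair in the extra spatial direction should reduce this to the same slicing argument Ilmanen uses for properties~(1)--(4), so I expect only routine verification and no new analytic input.
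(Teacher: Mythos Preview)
Your proposal is correct and is precisely the approach the paper takes: the paper simply states that the proof is ``almost identical to the proof of the mod~$2$ case, Theorem~\ref{theorem:Mod2FlowExistence}'' and omits the details. Your outline fills in exactly those details---replacing Theorem~\ref{Mod2CompatibilityTheorem} by Theorem~\ref{IntegerCompatibilityTheorem}, noting that Ilmanen's construction is already for integer-multiplicity currents, and flagging the slicing bookkeeping from the remark after Theorem~\ref{theorem:Mod2FlowExistence}---so nothing further is needed.
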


We omit the proof since it is almost identical to the proof of the mod $2$ case, 
Theorem~\ref{theorem:Mod2FlowExistence}

Note that if an integer-multiplicity current $A$ is compatible with an integral varifold $V$, then
$[V]$ is the flat chain mod $2$ corresponding to $A$.  It follows that the Brakke flow $V(\cdot)$
in Theorem~\ref{theorem:IntegralFlowExistence}
is cyclic mod $2$.  In particular, triple (or more generally odd-multiplicity) junctions
cannot occur in $V(\cdot)$ by Corollary~\ref{corollary:NoOddJunctions}.

Ruling out even-multiplicity junctions is more subtle.   In particular, limits of smooth Brakke
flows can have quadruple junctions.   For example, recall that Sherk constructed a complete, embedded,
singly periodic minimal surface in $\RR^3$ that is, away from the $z$-axis, asymptotic to the union
of the planes $x=0$ and $y=0$.   We may regard that surface as an equilibrium solution to mean
curvature flow.   Now dilate by $1/n$ and let $n\to \infty$.  The limit surface is a pair of orthogonal
planes and thus has a quadruple junction.

\section{Appendix: Flat Chains}\label{s:Appendix}

Let $G$ be a metric abelian coefficient group, i.e., an abelian
group with a translation invariant metric $d(\cdot,\cdot)$.
The norm $|g|$ of a group element $g$ is defined to be its distance from $0$.
The groups relevant for this paper are $\ZZ_2$ and $\ZZ$, both
with the standard metrics.
If $U$ is an open subset of $\RR^N$, 
let $\FF_c(U;G)$ be the space of flat chains
with coefficients in $G$ and with compact support in $U$, as defined in \cite{Fleming}.
We let $\FF_{m,c}(U;G)$ denote the space of $m$-dimensional chains in $\FF_c(U;G)$.

If $W$ is an open subset of $\RR^N$ and $A\in \FF_c(\RR^N;G)$, we let 
$\M_W(A)$ be the minimum of 
\begin{equation}\label{e:MassSeminorm}
    \liminf \mu_{A(i)}(W)
\end{equation}
among all sequences of compactly supported, finite-mass flat chains $A(i)$ such that
$A(i)$ converges in the flat topology to $A$.
By lower-semicontinuity of mass, $\M_W(A)=\mu_A(W)$ for any
chain $A$ of finite mass.

We define the flat seminorm $\FF_W$ by
\[
   \FF_W(A) = \inf \{ M_W(A-\partial Q) + M_W(Q) \},
\]
where the infimum is over all $Q\in \FF_{c}(\RR^N;G)$.

Let $U$ be an open subset of $\RR^N$.   Choose a countable collection $\mathcal{W}$  of nested open sets whose union is 
$U$ and each of whose closures is a compact subset of $U$.
We define the space $\FF_m(U;G)$ of flat $m$-chains in $U$ with 
coefficients in $G$ to be the completion of $\FF_{m,c}(U;G)$ with 
respect to the seminorms $\FF_W$ for $W\in \mathcal{W}$.
(It is straightforward to show that the resulting space is independent of the choice of
 $\mathcal{W}$.)
 
 By continuity, the seminorms $\FF_W$ extend to all of $\FF_m(U;G)$.
We also define the mass seminorms $\M_W$ on all of $\FF_m(U;G)$
exactly as above~\eqref{e:MassSeminorm}.

Convergence of flat chains means flat convergence, i.e., convergence
with respect to the seminorms $\FF_W$ for all
open $W\subset\subset U$ or, equivalently, for all  $W\in \mathcal{W}$
for a collection $\mathcal{W}$ of nested open sets as above.

We define the support of a flat chain $A\in \FF_m(U;G)$ as follows:
$x\notin \spt A$ if and only if there is a sequence $A_i\in \FF_{m,c}(U;G)$
and a ball $\BB(x,r)$ such that $A_i\to A$ and such that $\spt A_i$
is disjoint from $\BB(x,r)$ for every $i$.

In the proof of the main results, Theorems~\ref{Mod2CompatibilityTheorem} 
 and~\ref{IntegerCompatibilityTheorem}, we used the following version of the compactness
theorem for flat chains.   It is valid for any coefficient group $G$ in which all sets
  of the form
$\{ g\in G: |g| \le r\}$) are compact.   In particular, it is valid for the integers with
the usual norm and for the integers mod $2$.

\begin{theorem}[Compactness Theorem]\label{CompactnessTheorem}
Let $A_i$ be a sequence of flat $m$-chains in $U$ such that
the boundaries $\partial A_i$ converge to a limit chain $\Gamma$, and such that
\begin{equation}\label{MassBound}
   \limsup_i \M_W(A_i) < \infty
\end{equation}
for every open $W\subset\subset U$.
Then $A_i$ has a convergent subsequence.
\end{theorem}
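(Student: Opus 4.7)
The plan is to reduce the statement to the classical Fleming--Federer compactness theorem for compactly supported flat chains with uniformly bounded mass and boundary mass, via an exhaustion combined with a slicing argument. First, I would fix a nested exhaustion $W_1 \subset\subset W_2 \subset\subset \cdots$ of $U$ as in the definition of $\FF_m(U;G)$; by a standard diagonal argument it suffices to show that, for each fixed $W = W_k$, some subsequence of $(A_i)$ is Cauchy in $\FF_W$. Choose $V$ with $W \subset\subset V \subset\subset W_{k+1}$.

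The hard part will be that the hypothesis controls only the flat norm of $\partial A_i$, not its mass, whereas Fleming's compactness requires bounds on $\M(\partial A_i)$. I would overcome this by using two flat-norm decompositions. Since $\FF_V(\Gamma) < \infty$, choose $S_0 \in \FF_{m,c}(\RR^N;G)$ and an $(m-1)$-chain $R_0$ with $\Gamma = R_0 + \partial S_0$ and $\M_V(R_0) + \M_V(S_0) < \infty$. Since $\partial A_i - \Gamma \to 0$ in $\FF_V$, choose similar decompositions $\partial A_i - \Gamma = R_i + \partial S_i'$ with $\M_V(R_i) + \M_V(S_i') \to 0$. Setting $B_i := A_i - S_0 - S_i'$ yields $\partial B_i = R_0 + R_i$, so $\sup_i \M_V(\partial B_i) < \infty$ and $\sup_i \M_V(B_i) < \infty$, while $A_i - B_i = S_0 + S_i' \to S_0$ in $\FF_V$.

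It remains to truncate $B_i$ to compact support by slicing. Let $\phi(x) = \dist(x, \overline{W})$ and $R = \dist(\overline{W}, \partial V)$. The slicing coarea inequality $\int_0^R \M(\langle B_i, \phi, r\rangle)\,dr \leq \M_V(B_i)$ together with Fatou's lemma yields a level $r^* \in (0, R)$ with $\liminf_i \M(\langle B_i, \phi, r^*\rangle) < \infty$; after passing to a subsequence, these slice masses are uniformly bounded. The truncated chains $B_i \llcorner \{\phi < r^*\}$ are then compactly supported in $\overline{\{\phi < r^*\}} \subset V$, with uniformly bounded mass and boundary mass (the boundary being $(\partial B_i) \llcorner \{\phi < r^*\} + \langle B_i, \phi, r^*\rangle$, each term already controlled). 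The classical Fleming--Federer compactness theorem extracts a further subsequence converging in flat norm. Since $B_i - B_i \llcorner \{\phi < r^*\}$ is supported in $\{\phi \geq r^*\}$, disjoint from $W$, it has zero $\FF_W$-seminorm, so $(B_i)$ converges in $\FF_W$; then $A_i = B_i + S_0 + S_i'$ also converges there because $S_0$ is fixed and $S_i' \to 0$ in $\FF_V$. Diagonalizing in $k$ completes the argument.
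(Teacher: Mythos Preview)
Your argument follows the same high-level plan as the paper---reduce the flat convergence of $\partial A_i$ to a uniform mass bound on boundaries via a decomposition, localize to compact support, apply Fleming's classical compactness, and diagonalize over an exhaustion---but the localization step is carried out differently.  The paper pushes the $A_i$ forward under a Lipschitz ``one-point compactification'' map $f(x)=u(x)\,(x,1):\RR^N\to\RR^{N+1}$, where $u$ is a cutoff equal to $1$ on $W$ and vanishing off $V$; this collapses everything outside $V$ to a point, producing compactly supported chains to which a separate compact-support lemma (containing the boundary decomposition $\Gamma=\partial R$, $\partial A_i-\partial R\to 0$) applies, and one then slices by a halfspace in $\RR^{N+1}$ and radially projects back to recover convergence in $\FF_W$.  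You instead perform the boundary decomposition first, directly in $U$, and only then truncate the resulting $B_i$ by slicing with $\dist(\cdot,\overline W)$.

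Your route avoids the auxiliary embedding into $\RR^{N+1}$, but one technical point is glossed over: the chains $S_0,S_i'$ supplied by the $\FF_V$-decomposition lie in $\FF_{m,c}(\RR^N;G)$ with finite $\M_V$-seminorm but need not have (even locally) finite mass, so $B_i=A_i-S_0-S_i'$ need not have locally finite mass either, and the restriction $B_i\llcorner\{\phi<r^*\}$, the slice $\langle B_i,\phi,r^*\rangle$, and the claim that $B_i-B_i\llcorner\{\phi<r^*\}$ has zero $\FF_W$-seminorm all require justification.  The paper's device sidesteps this because push-forward by a globally Lipschitz map is defined and flat-continuous on all of $\FF_c$, with no mass hypothesis needed.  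Your argument can be repaired---for instance, slice $A_i$ itself first (it \emph{does} have locally finite mass by hypothesis) to obtain compactly supported finite-mass chains whose boundaries still converge for a.e.\ level by Fleming's slicing lemma, and only then carry out the boundary decomposition inside a fixed compact set, exactly as in the paper's Lemma---but as written this step needs attention.
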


We first prove the version for compact supports:

\begin{lemma}\label{CompactnessLemma}
If $A_i, \Gamma\in \FF_c(\RR^N;G)$ are supported in a fixed compact subset $X$
of $\RR^N$, if $\sup_i\M(A_i)<\infty$, and if $\FF(\partial A_i - \Gamma)\to 0$, then
there is a subsequence $A_{i(k)}$ and a chain $A$ such that $\FF(A_{i(k)}-A)\to 0$.
\end{lemma}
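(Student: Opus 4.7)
The plan is to reduce the statement to the classical Federer--Fleming compactness theorem for flat chains in which both $\M(A_i)$ \emph{and} $\M(\partial A_i)$ are uniformly bounded. That classical version is proved by applying the deformation theorem to each $A_i$ at scale $\epsilon$, producing a polyhedral chain $P_i^\epsilon$ on the $\epsilon$-grid whose coefficients in $G$ are bounded in terms of $\M(A_i)+\M(\partial A_i)$; since only finitely many $m$-cells of the grid meet a fixed compact set, and since bounded subsets of $G$ are compact by hypothesis, one extracts a subsequence along which $P_{i_k}^\epsilon$ converges in mass for each $\epsilon$, and a standard diagonal argument over $\epsilon_n = 2^{-n}$ then yields flat convergence.

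To reach this setting I would carry out two preliminary reductions of the sequence. First, I would absorb any ``infinite mass'' part of $\Gamma$: by definition of $\FF$, choose compactly supported, finite-mass chains $T, S$ with $\Gamma = T + \partial S$ and replace $A_i$ by $A_i' := A_i - S$. Then $\M(A_i')$ remains uniformly bounded and
$$\partial A_i' - T \;=\; \partial A_i - \Gamma \;\to\; 0$$
in flat norm, so after relabeling I may assume $\Gamma$ itself has finite mass. Second, I would arrange a mass bound on $\partial A_i$: for each $i$, pick finite-mass $B_i, C_i$ with compact support in a fixed neighborhood of $X\cup\spt\Gamma$ and
$$\partial A_i - \Gamma \;=\; B_i + \partial C_i,\qquad \M(B_i)+\M(C_i) \;<\; \FF(\partial A_i - \Gamma) + \tfrac{1}{i},$$
so that $\M(B_i), \M(C_i) \to 0$. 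Set $\tilde A_i := A_i - C_i$. Then $\M(\tilde A_i)\le \M(A_i)+\M(C_i)$ is bounded, while
$$\partial \tilde A_i \;=\; \partial A_i - \partial C_i \;=\; \Gamma + B_i,$$
so $\M(\partial \tilde A_i) \le \M(\Gamma)+\M(B_i)$ is uniformly bounded as well.

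With these reductions in place, the classical compactness theorem applied to the sequence $\tilde A_i$ (supported in a fixed compact set, with uniformly bounded mass and boundary mass) furnishes a subsequence $\tilde A_{i_k}$ that converges in the flat topology to some flat chain $\tilde A$. Since $\FF(C_{i_k}) \le \M(C_{i_k}) \to 0$, we obtain
$$A_{i_k} \;=\; \tilde A_{i_k} + C_{i_k} \;\to\; \tilde A$$
in flat norm; adding back the fixed chain $S$ from the first reduction yields the desired subsequential limit of the original $A_i$. The main technical ingredient hiding in ``classical compactness theorem'' is the deformation-theorem/diagonalization argument sketched in the first paragraph, which genuinely uses the hypothesis that bounded balls in $G$ are compact; by contrast, the two reductions above are purely formal bookkeeping with the flat seminorm.
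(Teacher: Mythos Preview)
Your proof is correct and follows essentially the same strategy as the paper's: modify $A_i$ by small corrector chains so that both mass and boundary mass become uniformly bounded, then invoke the classical compactness theorem (Fleming~\cite{Fleming}*{7.4}). The only cosmetic difference is that the paper uses $\partial\Gamma=0$ to write $\Gamma=\partial R$ with $R$ of finite mass in a single step, thereby combining your two reductions into one and setting $A_i^* = A_i + Q_i - R$ in place of your $\tilde A_i = A_i - C_i$.
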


\begin{proof}
We may assume $X$ is convex (otherwise replace it by its convex hull.)
Since $\partial A_i\to \Gamma$, we have $\partial \Gamma=0$.  It follows that
$\Gamma=\partial R$ for some chain $R$ of finite mass.

By hypothesis, 
\[
   \FF( \partial A_i - \partial R) \to 0.
\]
Thus there are chains $Q_i$ such that
\begin{equation}\label{zapped}
  \M(Q_i) + \M ( \partial Q_i + \partial A_i - \partial R)   \to 0.
\end{equation}
We may assume that $R$ and the $Q_i$ are supported in $X$. (Otherwise map them into $X$ by the nearest point
retraction of $\RR^N$ to $X$.)

Now let 
\[
  A_i^* = Q_i + A_i - R.
\]
Note that 
 \[
    \limsup_i \M(A_i^*) \le \sup_i\M(A_i) + \M(R)<\infty
 \]
 since $\M(Q_i)\to 0$ by~\eqref{zapped}.
From~\eqref{zapped} we also see that $\M(\partial A_i^*)\to 0$,
so in particular $\sup_i \M(\partial A_i^*)<\infty$.

Thus by the standard compactness theorem (see for example~\cite{Fleming}*{7.4}), 
we may, by passing to a subsequence,
assume that the $A_i^*$ converge to a limit $A^*$.
Hence
\begin{align*}
  \FF(A_i - (A^*+R)) 
  &= \FF( A_i^* - A^* - Q_i) \\
  &\le \FF(A_i^*-A^*) + \FF(Q_i) \\
  &\le \FF(A_i^*-A^*) + \M(Q_i) \\
  &\to 0
\end{align*}
since $A_i^*\to A^*$ and $ \MM(Q_i) \to 0$.
Thus the $A_i$ converge to $A^*+R$.
\end{proof}

\begin{proof}[Proof of Theorem~\ref{CompactnessTheorem}]
Let $W$ be an open set whose closure is a compact subset of $U$.
Choose an open set $V$ whose interior contains the closure of $W$ and whose closure
is a compact subset of $V$.
The idea of the proof is to work in a one-point compactification of $V$ so that we can apply
Lemma~\ref{CompactnessLemma}.

Let $u: \RR^N\to [0,1]$ be a smooth function that is $1$ on $W$, that is strictly positive on $V$,
and that vanishes on $\RR^N\setminus V$.

Define $f: \RR^N \to \RR^{N+1}$ by
\[
   F(x) = u(x) (x,1).
\]
Note that $f$ is lipschitz and that $f$ maps the complement of $V$ to a point.
(Indeed, $f(\RR^N)$ may be regarded as a one-point compactification of $V$.)
It follows that  $\M(f_\#S)$ and $\FF(f_\#S)$ can
be bounded by a constant times $\MM_V(S)$ and $\FF_V(S)$, respectively.

Let $A_i^*= f_\#A_i$.  Then the hypotheses of Lemma~\ref{CompactnessLemma} are satisfied
for the $A_i^*$.  Thus by passing to a subsequence we may assume that the $A_i^*$
converge in the $\FF$ metric.

By passing to a further subsequence, we may assume that 
\begin{equation}\label{SumOfFlatNorms}
   \sum_i \FF(A_i^* - A_{i+1}^*) < \infty.
\end{equation}
Let $H=H_\zeta$ be a halfspace of the form $\RR^N\times [\zeta,\infty)$.
From~\eqref{SumOfFlatNorms} it follows that
\begin{equation}\label{SlicedFlatSum}
  \sum_i \FF( A_i^*\llcorner H - A_{i+1}^*\llcorner H) < \infty
\end{equation}
for almost every $\zeta$ (See~\cite{Fleming}*{Lemma 2.1}). 
 Fix such a $\zeta\in (0,1)$ and the corresponding $H$.

The radial projection map
\begin{align*}
  &\pi: H\to \RR^N \\
  &\pi(x,y) = x/y
\end{align*}
is lipschitz, so by~\eqref{SlicedFlatSum} the chains $A_i^\dag:=\pi_\#(A_i^*\llcorner H)$ are 
 $\FF$-convergent.

It follows that the $A_i^\dag$ are also $\FF_W$ convergent (since $\FF_W\le \FF$).

But $\pi\circ f$ is the identity on $W$.  Hence $A_i^\dag$ and $A_i$ coincide in $W$.
(In other words, $A_i - A_i^\dag$ is supported in $W^c$.)

Thus the $A_i$ are also $\FF_W$ convergent.

We have shown that for every open $W\subset\subset U$, there is an $\FF_W$-convergent
subsequence of the $A_i$.  Now apply
 the diagonal argument to a nested sequence of such $W$'s that exhaust $U$.
\end{proof}

\begin{corollary}
Suppose $A_i$ are flat chains in $U$ such that 
\[
  \limsup_i (\M_W(A_i) + \M_W(\partial A_i)) < \infty
\]
for every $W\subset\subset U$.  Then $A_i$ has a subsequence
that converges in the flat topology.
\end{corollary}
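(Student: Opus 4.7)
The plan is to apply Theorem~\ref{CompactnessTheorem} twice, using it first on the sequence of boundaries and then on the original sequence. Theorem~\ref{CompactnessTheorem} already does the hard work (extracting a convergent subsequence from a sequence whose masses are locally bounded and whose boundaries converge), so the corollary amounts to observing that the hypothesis of boundary convergence can itself be achieved by one application of the theorem.

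First I would apply Theorem~\ref{CompactnessTheorem} to the sequence of boundaries $\partial A_i$, viewed as flat $(m-1)$-chains in $U$. By hypothesis, $\limsup_i \M_W(\partial A_i) < \infty$ for every $W\subset\subset U$, and the boundaries of these chains are $\partial(\partial A_i) = 0$, which form a trivially convergent sequence (with limit $0$). Thus the hypotheses of Theorem~\ref{CompactnessTheorem} are satisfied, and we extract a subsequence (relabel it $A_i$) such that $\partial A_i$ converges in the flat topology to some limit chain $\Gamma$.

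Next I would apply Theorem~\ref{CompactnessTheorem} a second time, now to the (already passed-to) subsequence $A_i$ itself. The local mass bound $\limsup_i \M_W(A_i) < \infty$ is preserved under passage to a subsequence, and by construction $\partial A_i \to \Gamma$ in the flat topology. Therefore Theorem~\ref{CompactnessTheorem} yields a further subsequence of the $A_i$ that converges in the flat topology, which is the desired conclusion.

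There is no real obstacle here; the corollary is a straightforward bootstrapping of the main compactness theorem. The only point worth mentioning is that the seminorms $\M_W$ appearing in the hypothesis are precisely those used in the statement of Theorem~\ref{CompactnessTheorem}, so no reformulation of the mass bound is necessary when invoking the theorem at either stage.
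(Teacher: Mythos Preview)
Your proposal is correct and is essentially identical to the paper's own proof: apply Theorem~\ref{CompactnessTheorem} first to the $\partial A_i$ (using $\partial\partial A_i=0$) to extract a subsequence with convergent boundaries, then apply Theorem~\ref{CompactnessTheorem} again to that subsequence.
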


\begin{proof}
By Theorem~\ref{CompactnessTheorem} applied to the $\partial A_i$, there is a subsequence $i(k)$
for which the boundaries $\partial A_{i(k)}$ converge. 
Consequently the $A_{i(k)}$ satisfy the hypotheses of Theorem~\ref{CompactnessTheorem}.
\end{proof}

\begin{theorem}[Rectifiablity Theorem]\label{RectifiabilityTheorem}
Suppose $A$ is a flat $m$-chain in $U$ with locally finite mass.
Then $A$ is rectifiable.
\end{theorem}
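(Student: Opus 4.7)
The plan is to reduce the theorem to the classical rectifiability theorem for compactly supported flat chains of finite mass (for $\ZZ_2$ this goes back to Fleming~\cite{Fleming}; for $\ZZ$ it is in \cite{SimonBook}) by means of a slicing-and-exhaustion argument that parallels the definition of $\FF_m(U;G)$ given in this appendix.

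First I would fix an exhaustion of $U$ by nested relatively compact open sets $W_1\subset\subset W_2\subset\subset\cdots$ with $\bigcup_j W_j=U$, and reduce to showing that for each $j$ there is a countable union $\cup\MM_j$ of $m$-dimensional $C^1$ submanifolds of $U$ with $\mu_A(W_j\setminus\cup\MM_j)=0$; the union $\bigcup_j\MM_j$ then witnesses rectifiability of $A$ on all of $U$. Since rectifiability is a property only of the measure $\mu_A$, it suffices to produce, for each $W_j$, a compactly supported flat chain of finite mass which agrees with $A$ on $W_j$.

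To produce such a chain, fix $W=W_j$ and choose $x_0$ and $R<R'$ so that $W\subset\subset\BB(x_0,R)$ and $\overline{\BB(x_0,R')}\subset\subset U$. By the definition of $\FF_m(U;G)$, there exist $A(i)\in\FF_{m,c}(U;G)$ with $A(i)\to A$ in the $\FF_V$-seminorm for any $V\subset\subset U$ containing $\overline{\BB(x_0,R')}$; by lower-semicontinuity of mass we have $\limsup_i\mu_{A(i)}(V)<\infty$. Applying the standard slicing inequality (cf.~\cite{Fleming}*{Lemma 2.1}) to the function $\rho(x)=|x-x_0|$, I would select $r\in(R,R')$ for which the radial slices of $A(i)$ have uniformly bounded mass. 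Then the restrictions $A(i)\llcorner\BB(x_0,r)$ are compactly supported flat chains of uniformly bounded mass and uniformly bounded boundary mass, so by the classical compactness theorem (Lemma~\ref{CompactnessLemma}) a subsequence converges in $\FF_c(\RR^N;G)$ to a compactly supported flat chain $\tilde A$ of finite mass. Because $\tilde A$ arises from the same limiting process as $A$ but with the cut-off in place, $\tilde A$ agrees with $A$ on $W$, in the sense that $\tilde A-A$ is supported in $\RR^N\setminus\overline W$.

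Finally, applying the classical rectifiability theorem for compactly supported flat chains of finite mass to $\tilde A$, we obtain a countable family $\MM_W$ of $C^1$ $m$-submanifolds supporting $\mu_{\tilde A}$ up to an $\HH^m$-null set; since $\mu_A=\mu_{\tilde A}$ on $W$, the family $\MM_W$ carries $\mu_A\llcorner W$, completing the reduction. The main technical obstacle is the slicing step: ensuring that for almost every radius $r$ in a suitable range the cut-off chain is genuinely a compactly supported flat chain of finite mass, with control coming uniformly from the approximating sequence. Once this localization is in hand, the theorem follows immediately from the compact-support case.
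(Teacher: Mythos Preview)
The paper does not give its own proof of this theorem: it simply cites the classical references (Federer--Fleming, Fleming, and \cite{WhiteRectifiability}) for the compactly supported case and leaves the extension to the locally defined chains of Section~\ref{s:Appendix} implicit. Your localization strategy---cut $A$ down to a compactly supported finite-mass chain agreeing with it on a given $W\subset\subset U$, then invoke the classical result---is exactly the natural way to make that extension explicit, and the overall architecture is sound.

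There are, however, two genuine issues in the execution. First, the sentence ``by lower-semicontinuity of mass we have $\limsup_i\mu_{A(i)}(V)<\infty$'' is backwards: lower semicontinuity gives $\M_V(A)\le\liminf_i\mu_{A(i)}(V)$, which bounds nothing from above. What you should say instead is that by the very definition of $\M_V(A)$ as a minimum over approximating sequences, one may \emph{choose} the sequence $A(i)$ (and pass to a subsequence) so that $\sup_i\mu_{A(i)}(V)<\infty$.

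Second, and more seriously, your claim that the restrictions $A(i)\llcorner\BB(x_0,r)$ have ``uniformly bounded boundary mass'' is not justified. One has
\[
   \partial\bigl(A(i)\llcorner\BB(x_0,r)\bigr)
     = (\partial A(i))\llcorner\BB(x_0,r) \;\pm\; \langle A(i),\rho,r\rangle,
\]
and while the slice term can indeed be controlled for a.e.\ $r$ via the coarea-type estimate, nothing in the setup bounds $\M\bigl((\partial A(i))\llcorner\BB(x_0,r)\bigr)$: neither $A$ nor the approximants $A(i)$ are assumed to have locally finite boundary mass. The fix is to argue for boundary \emph{convergence} rather than a boundary mass bound, exactly as in the paper's proof of Theorem~\ref{CompactnessTheorem}: pass to a subsequence with $\sum_i\FF_V(A(i)-A(i+1))<\infty$, apply Fleming's slicing lemma \cite{Fleming}*{Lemma~2.1} to conclude that for a.e.\ $r$ the chains $A(i)\llcorner\BB(x_0,r)$ form a flat-Cauchy sequence, and then invoke Lemma~\ref{CompactnessLemma} in the form actually stated (which requires $\partial A_i\to\Gamma$, not a mass bound on $\partial A_i$). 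With these two corrections your outline goes through.
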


Of course ``$A$ has locally finite mass'' means ``$\M_W(A)<\infty$
for every open $W\subset\subset U$".

The theorem was proved in the case $G=\ZZ$ by Federer and Fleming~\cite{FedererFleming}.
The proof is also presented in~\cite{FedererBook} and in~\cite{SimonBook}.
Rather different proofs are given in~\cite{SolomonClosure}
and~\cite{WhiteCompactness}.
Fleming proved the rectifiabilty theorem for all finite coefficient groups~\cite{Fleming}.
For the most general result, see~\cite{WhiteRectifiability},
which gives a simple necessary and sufficient condition on the coefficient group
in order for the rectifiablity theorem to hold.

\begin{bibdiv}

\begin{biblist}

\bib{AllardFirstVariation}{article}{
   author={Allard, William K.},
   title={On the first variation of a varifold},
   journal={Ann. of Math. (2)},
   volume={95},
   date={1972},
   pages={417--491},
   issn={0003-486X},
   review={\MR{0307015 (46 \#6136)}},
}

\bib{BrakkeBook}{book}{
   author={Brakke, Kenneth A.},
   title={The motion of a surface by its mean curvature},
   series={Mathematical Notes},
   volume={20},
   publisher={Princeton University Press},
   place={Princeton, N.J.},
   date={1978},
   pages={i+252},
   isbn={0-691-08204-9},
   review={\MR{485012 (82c:49035)}},
}

\bib{EckerBook}{book}{
   author={Ecker, Klaus},
   title={Regularity theory for mean curvature flow},
   series={Progress in Nonlinear Differential Equations and their
   Applications, 57},
   publisher={Birkh\"auser Boston Inc.},
   place={Boston, MA},
   date={2004},
   pages={xiv+165},
   isbn={0-8176-3243-3},
   review={\MR{2024995 (2005b:53108)}},
}

\bib{FedererBook}{book}{
   author={Federer, Herbert},
   title={Geometric measure theory},
   series={Die Grundlehren der mathematischen Wissenschaften, Band 153},
   publisher={Springer-Verlag New York Inc., New York},
   date={1969},
   pages={xiv+676},
   review={\MR{0257325 (41 \#1976)}},
}

\bib{FedererFleming}{article}{
   author={Federer, Herbert},
   author={Fleming, Wendell H.},
   title={Normal and integral currents},
   journal={Ann. of Math. (2)},
   volume={72},
   date={1960},
   pages={458--520},
   issn={0003-486X},
   review={\MR{0123260 (23 \#A588)}},
}

\bib{Fleming}{article}{
   author={Fleming, Wendell H.},
   title={Flat chains over a finite coefficient group},
   journal={Trans. Amer. Math. Soc.},
   volume={121},
   date={1966},
   pages={160--186},
   issn={0002-9947},
   review={\MR{0185084 (32 \#2554)}},
}

\bib{IlmanenBook}{article}{
   author={Ilmanen, Tom},
   title={Elliptic regularization and partial regularity for motion by mean
   curvature},
   journal={Mem. Amer. Math. Soc.},
   volume={108},
   date={1994},
   number={520},
   pages={x+90},
   issn={0065-9266},
   review={\MR{1196160 (95d:49060)}},
}

\bib{SimonBook}{book}{
   author={Simon, Leon},
   title={Lectures on geometric measure theory},
   series={Proceedings of the Centre for Mathematical Analysis, Australian
   National University},
   volume={3},
   publisher={Australian National University Centre for Mathematical
   Analysis},
   place={Canberra},
   date={1983},
   pages={vii+272},
   isbn={0-86784-429-9},
   review={\MR{756417 (87a:49001)}},
}

\bib{SolomonClosure}{article}{
   author={Solomon, Bruce},
   title={A new proof of the closure theorem for integral currents},
   journal={Indiana Univ. Math. J.},
   volume={33},
   date={1984},
   number={3},
   pages={393--418},
   issn={0022-2518},
   review={\MR{740957 (86a:49082)}},
}
		
\bib{WhiteCompactness}{article}{
   author={White, Brian},
   title={A new proof of the compactness theorem for integral currents},
   journal={Comment. Math. Helv.},
   volume={64},
   date={1989},
   number={2},
   pages={207--220},
   issn={0010-2571},
   review={\MR{997362 (90e:49052)}},
}

\bib{WhiteRectifiability}{article}{
   author={White, Brian},
   title={Rectifiability of flat chains},
   journal={Ann. of Math. (2)},
   volume={150},
   date={1999},
   number={1},
   pages={165--184},
   issn={0003-486X},
   review={\MR{1715323 (2000j:49065)}},
}

\end{biblist}

\end{bibdiv}		

\end{document}